\documentclass[11pt]{article}
\usepackage[margin=1in]{geometry}
\usepackage[utf8]{inputenc}
\usepackage[T1]{fontenc}
\usepackage[english]{babel}
\usepackage{lmodern}
\usepackage{graphicx}
\usepackage{wrapfig}
\usepackage{subfigure}
\usepackage{caption}
\usepackage{paralist}
\usepackage{enumitem}  
\usepackage{amssymb}

\usepackage[linesnumbered,ruled,vlined]{algorithm2e}

\graphicspath{{./figures/}}

\makeatletter
\renewcommand{\p@enumii}{}
\makeatother

\newcommand{\titel}{
	On the spanning structure hierarchy of 3-connected planar graphs
	}
\usepackage[usenames]{color}
\definecolor{hellblau}{rgb}{0.2,0.4,1} 
\definecolor{dunkelblau}{rgb}{0,0,0.8}
\definecolor{dunkelgruen}{rgb}{0,0.5,0}
\usepackage[
	pdftex,
	colorlinks,
	linkcolor=dunkelblau,
	urlcolor=dunkelblau,
	citecolor=dunkelgruen,
	bookmarks=true,
	linktocpage=true,
	pdfauthor={On-Hei Solomon Lo},
	pdfsubject={}
]{hyperref}
\urlstyle{same}

\usepackage{pdfpages}
\usepackage{amsmath}
\usepackage{amsthm}
\allowdisplaybreaks
\usepackage{amsfonts}
\theoremstyle{plain}
	\newtheorem{satz}{Satz}[]
	\newtheorem{theorem}[satz]{Theorem}
	\newtheorem{lemma}[satz]{Lemma}
	
	\newtheorem{proposition}[satz]{Proposition}
		
\theoremstyle{remark}
	
\theoremstyle{definition}
	
	\newtheorem{conjecture}[satz]{Conjecture}
	\newtheorem{problem}[satz]{Problem}

\makeatletter
\newcommand{\setword}[2]{%
	\phantomsection
	#1\def\@currentlabel{\unexpanded{#1}}\label{#2}%
}
\makeatother

\begin{document}
	\title{\titel}
		\author{
		On-Hei Solomon Lo\thanks{School of Mathematical Sciences, Xiamen University, Xiamen 361005, PR China. This work was partially supported by NSFC grant 11971406.}
		}
	\date{}
	\maketitle

\begin{abstract}
The prism over a graph $G$ is the Cartesian product of $G$ with the complete graph $K_2$. $G$ is prism-hamiltonian if the prism over $G$ has a Hamilton cycle. A good even cactus is a connected graph in which every block is either an edge or an even cycle, and every vertex is contained in at most two blocks. It is known that good even cacti are prism-hamiltonian. Indeed, showing the existence of a spanning good even cactus has become one of the most common techniques in proving prism-hamiltonicity. \v{S}pacapan asked whether having a spanning good even cactus is equivalent to having a hamiltonian prism for 3-connected planar graphs. In this article we give a negative answer to this question by showing that there are infinitely many 3-connected planar prism-hamiltonian graphs that have no spanning good even cactus. We also prove the existence of an infinite class of 3-connected planar graphs that have a spanning good even cactus but no spanning good even cactus with maximum degree three.
\end{abstract}

\section{Introduction} \label{sec:intro}

In 1884, in an attempt to solve the Four Color Theorem (which was open then), Tait~\cite{Tait1884} gave a proof which required that every 3-connected 3-regular planar graph is hamiltonian. The proof, however, turned out to be false and the first counterexample to the hypothesis, a non-hamiltonian 3-connected 3-regular planar graph, was constructed by Tutte~\cite{Tutte1946} in 1946. It is also known that there are 3-connected 3-regular planar graphs that have no Hamilton path~\cite{Gruenbaum1962, Brown1961}. 

On the positive side, Tutte~\cite{Tutte1956} showed in 1956 that every 4-connected planar graph does have a Hamilton cycle. Tutte's result was strengthened by Thomassen~\cite{Thomassen1983} who proved that every 4-connected planar graph is Hamilton-connected, that is, any two vertices are connected by a Hamilton path.

Hamilton cycles and Hamilton paths can be generalized by the following notion. A \emph{$k$-walk} is a spanning closed walk that visits every vertex at most $k$ times; a \emph{$k$-tree} is a spanning tree with maximum degree at most $k$. Clearly, a graph has a $k$-walk if it has a $k$-tree. It was shown in~\cite{Jackson1990} that every $k$-walk contains a subgraph that is a $(k + 1)$-tree. In particular, 1-walk and 2-tree are the same notion of Hamilton cycle and Hamilton path, respectively. 

The \emph{prism} over a graph $G$ is the Cartesian product of $G$ and $K_2$, denoted by $G \square K_2$. $G$ is \emph{prism-hamiltonian} if and only if $G \square K_2$ is hamiltonian. The following chain of implications on the existence of spanning structures is well-known:

\begin{center}
	Hamilton cycle $\Rightarrow$ Hamilton path $\Rightarrow$ hamiltonian prism $\Rightarrow$ 2-walk $\Rightarrow$ 3-tree
\end{center}

Barnette~\cite{Barnette1966} showed that every 3-connected planar graph has a 3-tree. Confirming a conjecture of Jackson and Wormald~\cite{Jackson1990}, Gao and Richter~\cite{Gao1994} proved that every 3-connected planar graph has a 2-walk (see also~\cite{Gao1995}). It is natural to ask if all 3-connected planar graphs can reach the level of ``hamiltonian prism'' in the suggested hierarchy. This was formulated as a conjecture by Kaiser, Ryj\'{a}\v{c}ek, Kr\'{a}l', Rosenfeld and Voss~\cite{Kaiser2007}, which was also attributed to Rosenfeld and Barnette in~\cite{Spacapan2021}.

\begin{conjecture}[{\cite[Conjecture~1]{Kaiser2007}}] \label{conj:PHP}
	Every 3-connected planar graph has a hamiltonian prism.
\end{conjecture}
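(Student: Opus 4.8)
The plan is to replace the prism-hamiltonicity question by the purely structural statement that every $3$-connected planar graph contains a spanning good even cactus. Since good even cacti are prism-hamiltonian, producing such a spanning subgraph settles Conjecture~\ref{conj:PHP} and, more importantly, strips away the Cartesian-product layer entirely: one no longer reasons about $G \square K_2$, only about connected spanning subgraphs of $G$ whose blocks are single edges or even cycles and in which every vertex lies in at most two blocks. So the whole problem becomes that of building one constrained spanning subgraph inside a single plane graph.

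To construct such a cactus I would fix a plane embedding and argue by induction on $|V(G)|$, exploiting the decomposition theory of $3$-connected planar graphs. Two natural engines are available: Tutte's theory of paths and cycles through prescribed vertices and edges (which underlies Thomassen's proof that $4$-connected planar graphs are hamiltonian), and reductions across $3$-cuts or along contractible edges, in the spirit of Barnette's $3$-tree theorem~\cite{Barnette1966} and the Gao--Richter $2$-walk theorem~\cite{Gao1994}. The idea is to route a spanning-like Tutte structure through $G$ and then attach pendant even cycles and single edges to absorb the vertices it misses; alternatively, contract one side of a $3$-cut, obtain a cactus inductively in the smaller $3$-connected planar graph, and then re-expand and splice the two cacti together across the cut. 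Either way one maintains, as invariants, connectivity, the edge-or-even-cycle condition on every block, and the at-most-two-blocks-per-vertex condition, most plausibly by strengthening the induction hypothesis to prescribe the cactus's behaviour at a chosen boundary edge or vertex.

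The hard part will be the parity. Requiring every cycle-block to be \emph{even} is a global condition that interacts badly with the local moves used to extend or splice the structure: a Tutte cycle, or the cycle produced when closing up a path, need not have even length, and the obvious reroutings that repair its parity tend to force some vertex into a third block or to disconnect the cactus—precisely the two conditions one is trying to preserve. I expect this to be the crux, demanding a delicate case analysis of how the cut-edges meet the inductively built cactus, together with parity-correcting gadgets that keep the block-degree bounded. Finally, I would flag a genuine risk in this whole line of attack: it is conceivable that spanning good even cacti simply fail to exist in some $3$-connected planar graphs even when their prisms are hamiltonian, in which case the cactus reduction, however natural, cannot by itself prove Conjecture~\ref{conj:PHP}, and one would be forced either to relax the target structure or to construct a Hamilton cycle in $G \square K_2$ directly.
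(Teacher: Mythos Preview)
Your proposal cannot succeed, for two independent reasons. First, Conjecture~\ref{conj:PHP} is false: \v{S}pacapan~\cite{Spacapan2021} constructed infinitely many $3$-connected planar graphs whose prisms are not hamiltonian, so no proof strategy whatsoever can establish the conjecture. The paper does not prove this statement; it quotes it as a conjecture and immediately records its refutation. Second, even restricting to those $3$-connected planar graphs that \emph{are} prism-hamiltonian, your reduction to spanning good even cacti is doomed: Theorem~\ref{thm:no} of the present paper exhibits infinitely many $3$-connected planar prism-hamiltonian graphs $G(D_n)$ that have no spanning good even cactus at all. The risk you flag in your final sentence is therefore not hypothetical---it is precisely what this paper establishes.

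The concrete obstruction is the fragment $A$ (Figure~\ref{fig:A}) together with Lemma~\ref{lem:II}: no $P$-good even cactus of $A$, for an edge path $P$ joining the two endvertices $u_1,u_3$, can span $A$. Chaining eight copies of $A$ with separators $D_n$ and adding two apex vertices $s,t$ forces, via Lemmas~\ref{lem:good}--\ref{lem:2good}, any putative spanning good even cactus to break into too many components upon deletion of $s$ and $t$. Your inductive splicing across $3$-cuts would run head-on into this: the parity and block-degree constraints you correctly identify as delicate are in fact \emph{unsatisfiable} in these graphs, not merely hard to maintain.
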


Supporting this conjecture, a number of subclasses were shown to be prism-hamiltonian: 3-connected 3-regular (not necessarily planar) graphs~\cite{Paulraja1993}, Halin graphs~\cite{Kaiser2007}, 3-connected bipartite planar graphs~\cite{Biebighauser2008}, near-triangulations~\cite{Biebighauser2008} and 3-connected planar graphs with minimum degree at least four~\cite{Spacapan2021a}. However, a recent breakthrough by \v{S}pacapan~\cite{Spacapan2021} showed that the conjecture is not true in general.

\begin{theorem}[{\cite{Spacapan2021}}]
	There are infinitely many $3$-connected planar non-prism-hamiltonian graphs.
\end{theorem}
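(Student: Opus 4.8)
The plan is to exhibit a property that is necessary for prism-hamiltonicity yet can fail for a $3$-connected planar graph, and then to realize the failure in an explicit infinite family. The first thing I would check is the crude separator obstruction: if $S \subseteq V(G)$ is a vertex cut, then deleting the $2|S|$ vertices of $S \times \{1,2\}$ from a Hamilton cycle of $G \square K_2$ leaves at most $2|S|$ paths, while each component $C$ of $G-S$ contributes the connected piece $C \square K_2$, which needs at least one of these paths; hence $c(G-S) \le 2|S|$ is necessary. This, however, cannot be violated in our setting: in a $3$-connected planar $G$ every component of $G-S$ has at least three neighbours in $S$, so the planar bipartite component-to-$S$ incidence graph has at least $3\,c(G-S)$ edges and therefore $c(G-S) \le 2|S| - 4$. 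This is exactly consistent with the quoted fact that $3$-connected planar graphs always admit a $2$-walk, and it tells me the obstruction must be finer than a toughness count.

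So I would localize the argument. Build $G_n$ from a planar \emph{frame} to which $n$ disjoint copies of a single fixed small planar gadget $\Gamma$ are glued, each copy attached along a $3$-cut $\{a,b,c\}$, with the frame and the attachment chosen so that $G_n$ is $3$-connected and planar. The gadget is the heart of the matter. Any Hamilton cycle $H$ of $G_n \square K_2$ meets the interior $V(\Gamma)\times\{1,2\}$ of a copy in a system of vertex-disjoint paths that spans that interior and whose endpoints attach to the six boundary vertices $a_1,a_2,b_1,b_2,c_1,c_2$; since there are at most three such paths, only finitely many combinatorial endpoint patterns can occur. The local lemma to prove is that $\Gamma \square K_2$ admits no spanning path system of the \emph{cheap} patterns --- those using few boundary vertices --- so that every gadget is forced into an \emph{expensive} traversal that commits a prescribed large share of its six boundary vertices as path-ends.

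The final step is to show that these forced expensive traversals are globally incompatible: they cannot be stitched into a single spanning cycle, either because the resulting demand exceeds the connection capacity the frame can supply, or because the only admissible way to cover all the gadgets closes up into several disjoint cycles rather than one. Either way $G_n \square K_2$ has no Hamilton cycle, and letting $n \to \infty$ yields infinitely many examples, with $3$-connectivity and planarity built into the construction. The main obstacle is unquestionably the local gadget lemma: one must design a single planar $\Gamma$ whose prism resists all cheap spanning path systems, and then verify this by a complete but finite case analysis over the endpoint patterns on $\{a_i,b_i,c_i\}$. The delicacy is that the extra freedom afforded by the two copies and the matching edges makes many a priori plausible gadgets traversable, so $\Gamma$ must be tuned carefully so that this freedom is exhausted.
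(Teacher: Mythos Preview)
This theorem is not proved in the paper at all: it is quoted as a result of \v{S}pacapan and left without proof, so there is no ``paper's own proof'' to compare against. Your proposal is therefore not competing with anything in the text; it would have to stand on its own.

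As it stands, the proposal is a strategy sketch, not a proof. You correctly rule out the crude toughness obstruction and correctly identify the architecture that actually works in \v{S}pacapan's paper: glue many copies of a fixed planar gadget $\Gamma$ along $3$-cuts into a frame, classify the finitely many ways a Hamilton cycle of the prism can traverse a copy of $\Gamma\square K_2$ via its six boundary vertices, show that only ``expensive'' local patterns survive, and then argue that the frame cannot accommodate that many expensive traversals simultaneously. But the entire content of the theorem lives in the step you explicitly postpone: exhibiting a concrete $\Gamma$ and carrying out the finite case analysis showing that its prism admits no cheap spanning path system. You call this ``the main obstacle'' and leave it open; without it, nothing has been proved. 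The difficulty is genuine --- as you note, the extra matching edges in the prism make most candidate gadgets traversable --- and \v{S}pacapan's actual gadget (essentially the fragment the present paper calls $A$) together with its case analysis is the substance of his paper. So the gap is not a technicality: it is the theorem.
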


Based on \v{S}pacapan's technique, Ikegami, Maezawa and Zamfirescu~\cite{Ikegami2021} provided various classes of counterexamples with special properties.

A \emph{good even cactus} is a connected graph in which every block is either an edge or an even cycle and every vertex is contained in at most two blocks (see Figure~\ref{fig:fp}). It is known that the prism of any good even cactus is hamiltonian. Therefore, one can assert that a graph is prism-hamiltonian if it has a spanning good even cactus. This strategy has been used in proving prism-hamiltonicity for various planar and non-planar graph classes; we refer to~\cite{Paulraja1993, Kaiser2007, Biebighauser2008, Ozeki2009, Bueno2011, Ellingham2020a, Ellingham2020, Spacapan2021a} for examples.\footnote{Note that some graph classes in the given examples were not explicitly shown to have the property of having a spanning good even cactus, but one may justify it by modifying the corresponding original proof.} It is worth noting that in~\cite{Paulraja1993, Kaiser2007, Ozeki2009, Bueno2011, Ellingham2020a} a more restrictive approach was adopted, namely showing the existence of a spanning good even cactus with maximum degree at most three. This proof technique motivates us to refine the spanning structure hierarchy as follows:

\begin{center}
	Hamilton cycle $\Rightarrow$ Hamilton path \\
	$\Rightarrow$ spanning good even cactus with maximum degree at most three \\
	$\Rightarrow$ spanning good even cactus $\Rightarrow$ hamiltonian prism $\Rightarrow$ 2-walk $\Rightarrow$ 3-tree
\end{center}

Obviously the three new implications hold. There are 3-connected planar graphs showing that the implication from ``Hamilton path'' to ``spanning good even cactus with maximum degree at most three'' is sharp; an example is given in Figure~\ref{fig:NT}. \v{S}pacapan~\cite{Spacapan2021} recently asked whether the implication from ``spanning good even cactus'' to ``hamiltonian prism'' can be reversed for 3-connected planar graphs.

\begin{problem}[{\cite[Problem~3.3]{Spacapan2021}}]
	Prove or disprove the following statement. Every 3-connected planar prism-hamiltonian graph has a spanning good even cactus.
\end{problem}

\begin{figure} [t]
	\centering
	\includegraphics[scale = 0.6]{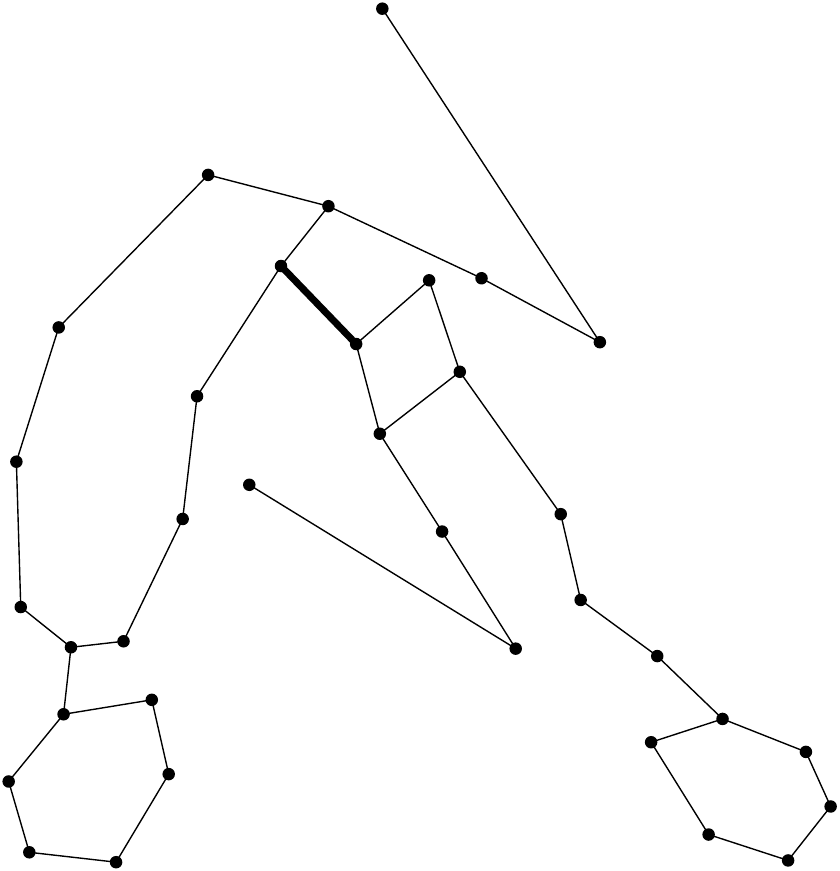}
	\caption{A good even cactus with maximum degree three, which becomes a good even cactus with maximum degree four if the thick edge is contracted.}
	\label{fig:fp}
\end{figure}

\begin{figure} [t]
	\centering
	\includegraphics[scale = 0.8]{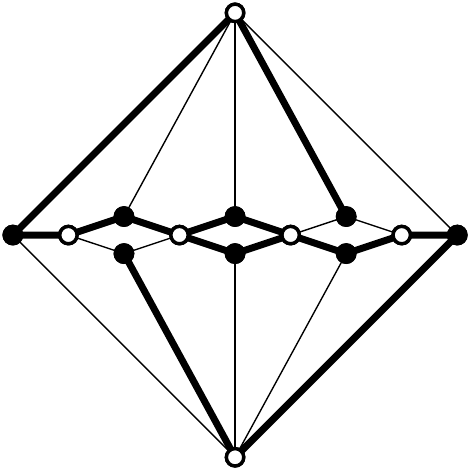}
	\caption{A variation of Herschel's graph which has no Hamilton path (as it has eight components when the six white vertices are removed) and has a spanning good even cactus with maximum degree at most three (thick edges).}
	\label{fig:NT}
\end{figure}

The main purpose of this article is to show that every implication in the new hierarchy proposed above is sharp. Inspired by \v{S}pacapan's counterexamples to Conjecture~\ref{conj:PHP}, we show that there are infinitely many 3-connected planar graphs that have a spanning good even cactus but no such spanning subgraph with maximum degree at most three (Theorem~\ref{thm:atmost3}) and there are infinitely many 3-connected planar graphs that have a hamiltonian prism but no spanning good even cactus (Theorem~\ref{thm:no}), thereby answering the question raised by \v{S}pacapan.

We remark that ``hamiltonian prism'' can be replaced by ``spanning good cactus'' in the hierarchy we consider above. As mentioned in~\cite{Spacapan2021}, the partitioning result given in~\cite{Gao1994} assures that every 3-connected planar graph has a good cactus as a spanning subgraph.

The proofs of our results will be given in the next section. We conclude this section with some terminology and notation.

Let $H$ be a graph and $V$ be a vertex set (not necessarily a subset of $V(H)$). The subgraph of $H$ induced by $V \cap V(H)$ is denoted by $H[V]$. For graphs $H_1$ and $H_2$, $H_1[H_2]$ means $H_1[V(H_2)]$. For any set $U$ of vertices and edges, we use $H - U$ to denote that graph obtained from $H$ by deleting the elements in $U$; we may also write $H - u$ instead of $H - \{u\}$ when $U = \{u\}$. The union $H_1 \cup H_2$ of graphs $H_1$ and $H_2$ is defined to be $(V(H_1) \cup V(H_2), E(H_1) \cup E(H_2))$. Let $H'$ be a subgraph of $H$ and $E \subseteq E(H)$ be an edge set. We may denote by $H' \cup E$ the union of $H'$ and the subgraph of $H$ induced by $E$. Let $u, v$ be two vertices in a connected graph $H$. The graph $H[u, v]$ is defined to be the minimal union of blocks of $H$ such that $H[u, v]$ is connected and contains vertices $u$ and $v$.
For any graph $H$ and any $v \in V(H)$, let $H^i$ be a copy of $H$, we may denote by $v^i$ the duplicate of $v$ in $H^i$.

A \emph{cactus} $Q$ is a connected graph such that every block of $Q$ is either an edge or a cycle. For any $v \in V(Q)$, the \emph{block degree} $b_Q(v)$ of $v$ in $Q$ is defined to be the number of blocks of $Q$ that contain $v$. We call a block of $Q$ that is an edge (a cycle) an \emph{edge block} (a \emph{cycle block}).  We say that $Q$ is \emph{even} if every cycle block of it is an even cycle. A path $P$ in $Q$ is an \emph{edge path} if every edge of $P$ is an edge block of $Q$. A cactus $Q$ is \emph{good} if $b_Q(v) \le 2$ for any $v \in V(Q)$. 
Note that if we delete some vertex from a good even cactus, the new components are even cacti but need not be good anymore. For this reason we introduce two more types of cacti as follows:
\begin{itemize}
	\item Let $P$ be an edge path in a cactus $Q$. We say that $Q$ is a \emph{$P$-good} cactus if~(i) $b_Q(v) \le 2$ for any vertex $v$ that is not an internal vertex of $P$ and~(ii) $b_Q(v) \le 3$ for any internal vertex of $P$. 
	\item Let $P_1$ and $P_2$ be two edge paths in a cactus $Q$ that have at most one common vertex. Then $Q$ is a \emph{$\{P_1, P_2\}$-good} cactus if~(i) $b_Q(v) \le 2$ for any vertex $v$ that is neither an internal vertex of $P_1$ nor an internal vertex of $P_2$,~(ii) $b_Q(v) \le 3$ for any vertex $v$ that is an internal vertex of either $P_1$ or $P_2$ (but not for both) and~(iii) $b_Q(v) = 4$ for any vertex $v$ that is an internal vertex for both paths $P_1$ and $P_2$. 
\end{itemize}
A cactus $Q$ is \emph{$1$-good} if there exists an edge path $P$ in $Q$ such that $Q$ is $P$-good; \emph{$2$-good} if there exist two edge paths $P_1$ and $ P_2$ in $Q$ sharing at most one vertex such that $Q$ is $\{P_1, P_2\}$-good. 

We always assume that the complete graph $K_2$ is on $\{\alpha, \beta\}$. The prism $H \square K_2$ over $H$ is defined to be the graph on $V(H \square K_2) := V(H) \times \{\alpha, \beta\}$ such that $(u, \gamma) (v, \delta)$ are adjacent if and only if~(i) $uv \in E(H)$ and $\gamma = \delta$ or~(ii) $u = v$ and $\gamma \neq \delta$.
For any $\gamma \in V(K_2)$, we denote by $\bar{\gamma}$ the vertex of $K_2$ other than $\gamma$. 
Let $S$ be a subgraph of $H \square K_2$. 
The reflection $R$ of $S$ is a graph defined as follows: (i) $(u, \gamma) \in V(R)$ if and only if $(u, \bar{\gamma}) \in V(S)$; (ii) $(u, \gamma) (v, \delta) \in E(R)$ if and only if $(u, \bar{\gamma}) (v, \bar{\delta}) \in E(S)$.

\section{Results} \label{sec:results}

We first introduce three fragments that we need in our construction. The plane graph $A$ is as depicted in Figure~\ref{fig:A}. Let $n$ be any positive integer. We define $C_n$ to be the cycle of length $2n + 3$ (see Figure~\ref{fig:CD}(a)) and $D_n$ to be the cactus that has exactly two cycle blocks each of them has length $2n + 3$ (see Figure~\ref{fig:CD}(b)). 
We refer to Figures~\ref{fig:A} and~\ref{fig:CD} for names of vertices in $A$, $C_n$ and $D_n$ that are not explicitly defined in the text.

\begin{figure} [t]
	\centering
	\includegraphics[scale = 1.3]{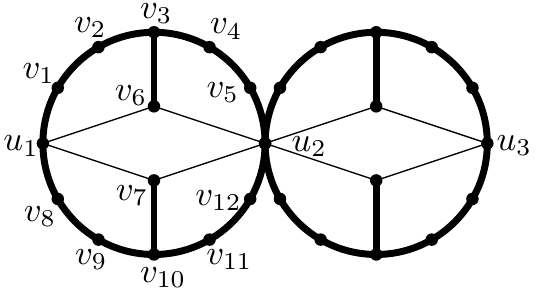}
	\caption{The plane graph $A$ with endvertices $u_1$ and $u_3$. The thick edges induce a spanning good even cactus $K_A$ of $A$ with $b_{K_A}(u_1) = b_{K_A}(u_3) = 1$.}
	\label{fig:A}
\end{figure}

\begin{figure} [t]
	\centering
	\subfigure[]{\includegraphics[scale = 1.3]{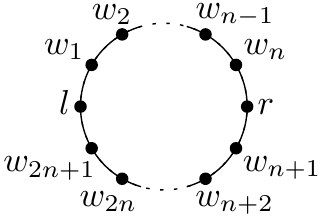}} \label{subfig:C}
	\hfil
	\subfigure[]{\includegraphics[scale = 1.3]{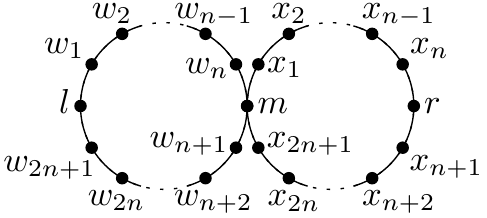}} \label{subfig:D}\\
	\caption{(a) The plane graph $C_n$ with endvertices $l$ and $r$. (b) The plane graph $D_n$ with endvertices $l$ and $r$.} 
	\label{fig:CD}
\end{figure}

We now define our main construction. Let $B$ be either $C_n$ or $D_n$ (which will be fixed throughout the construction). We take eight copies $A^1, \dots, A^8$ of $A$ and seven copies $B^1, \dots, B^7$ of $B$, and form a connected graph $G^-(B)$ from these fifthteen fragments by identifying $u_3^i$ with $l^i$ and identifying $r^i$ with $u_1^{i + 1}$ for every $i \in \{1, \dots, 7\}$. We see $G^-(B)$ as a plane graph by inheriting the plane embeddings of $A$ and $B$ given by Figures~\ref{fig:A} and~\ref{fig:CD}. So the boundary walk around the unbounded face is the union of two edge-disjoint paths with endvertices $u_1^1$ and $u_3^8$, so that the vertices $w_1^1, \dots, w_1^7$ and, if $B$ is $D_n$, the vertices $x_1^1, \dots, x_1^7$ will be contained in the ``upper path'' but not the ``lower path''. The graph $G(B)$ is obtained from $G^-(B)$ and two new vertices $s$ and $t$ by joining $s$ to every vertex in the upper path and $t$ to every vertex in the lower path. We will simply write $G$ and $G^-$ instead of $G(B)$ and $G^-(B)$ if it is clear from the context what $B$ denotes or it causes no ambiguity. It is clear that $G$ is planar. Moreover, it can be shown in exactly the same way as in the proof of~\cite[Lemma~2.5]{Spacapan2021} that $G$ is 3-connected. We conclude with the following lemma.

\begin{lemma}
	Let $n$ be any positive integer and $B$ be either $C_n$ or $D_n$. The graph $G(B)$ constructed above is a $3$-connected planar graph.
\end{lemma}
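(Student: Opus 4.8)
The plan is to treat planarity first and then rule out separating sets of size at most two. For planarity, I would start from the inherited plane embedding of $G^-$: its fragments $A^i$ and $B^i$ are glued only at the single cut vertices $u_3^i = l^i$ and $r^i = u_1^{i+1}$, and gluing plane graphs along one vertex keeps the result plane. By construction the boundary of the unbounded face is the cycle $W$ formed by the upper and lower paths, which meet exactly at $u_1^1$ and $u_3^8$. I would then place $s$ in the unbounded face and join it to the upper-path vertices by a planar fan; this splits the unbounded face, and in the resulting face whose boundary still contains the whole lower path I would place $t$ and join it to the lower-path vertices by a second planar fan. This realises $G$ in the plane, so $G$ is planar.

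For $3$-connectivity it suffices to show that $G - S$ is connected for every $S \subseteq V(G)$ with $|S| \le 2$. The two facts I would exploit are that $s$ is adjacent to every vertex of the upper path while $t$ is adjacent to every vertex of the lower path, and that the shared endpoints $u_1^1$ and $u_3^8$ are adjacent to both $s$ and $t$. I would then split into cases according to how $S$ meets $\{s,t\}$. If $\{s,t\} \subseteq S$, then $G - S = G^-$, which is connected, so $S$ is no cut. If $S \cap \{s,t\} = \emptyset$, both apices survive; every remaining vertex of $G^-$ reaches the boundary inside its own fragment and hence reaches $s$ or $t$, while $s$ and $t$ lie in a common component because at least one of $u_1^1, u_3^8$ survives (and if $S = \{u_1^1, u_3^8\}$, because the interior of some fragment still joins an upper vertex to a lower one). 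If exactly one apex, say $s$, lies in $S$, then the surviving apex $t$ dominates the entire lower path; since within each fragment the lower boundary is joined to the upper boundary and to the interior, deleting the single remaining vertex of $S$ cannot detach anything from $t$, even when that vertex is a gluing cut vertex of $G^-$, across which $t$ reconnects the two sides.

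The crux, and the only place where the particular fragments enter, is to verify that no two deleted vertices can detach an \emph{internal} vertex of some copy $A^i$ or $B^i$ from the two apices. This reduces to a finite inspection of the fixed plane graphs $A$, $C_n$ and $D_n$ of Figures~\ref{fig:A} and~\ref{fig:CD}: for each fragment one checks that every interior vertex is joined to the boundary cycle $W$ by two internally disjoint paths, and that the fragment cannot be split by a single vertex except at the cut vertices coming from the cactus structure of $D_n$ --- across any such cut vertex the surviving apex reconnects the two sides, since it dominates a whole boundary path meeting both of them. This is precisely the local computation carried out in the proof of~\cite[Lemma~2.5]{Spacapan2021}, and I expect it to be the main, though entirely routine, obstacle. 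Once it is in place the three cases above close, and $G$ is $3$-connected.
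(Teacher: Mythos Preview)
Your proposal is correct and follows essentially the same route as the paper: the paper does not give an independent argument either, but simply observes that planarity is clear from the construction and that 3-connectivity ``can be shown in exactly the same way as in the proof of~\cite[Lemma~2.5]{Spacapan2021}''. Your write-up is in fact more detailed than the paper's, supplying the case split on $S\cap\{s,t\}$ explicitly before deferring the fragment-by-fragment inspection to the same reference; one small imprecision is that for $B=D_n$ the outer boundary walk of $G^-$ is not a simple cycle (the cut vertex of each $D_n$ is traversed twice), but this does not affect the planarity argument.
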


The main goal of this article is to prove the following two results.

\begin{theorem} \label{thm:atmost3}
	For any positive integer $n$, the $3$-connected planar graph $G(C_n)$ has a spanning good even cactus but no spanning good even cactus with maximum degree three.
\end{theorem}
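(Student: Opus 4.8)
The statement has two halves, which I treat separately.

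\textbf{Existence of a spanning good even cactus.} The plan is to build one explicitly from the prescribed ingredients. On each copy $A^i$ I install $K_A$, which spans $A^i$ and satisfies $b_{K_A}(u_1^i)=b_{K_A}(u_3^i)=1$, so that each identified vertex $u_3^i=l^i$ and $u_1^{i+1}=r^i$ retains one free unit of block degree. The delicate feature is that every $C_n^i$ is an \emph{odd} cycle (length $2n+3$), so it can never be a cycle block, and its upper vertex $w_1^i$ is adjacent only to $l^i$, $r^i$ and the apex $s$. The mechanism that saves us is the odd length together with the apices: for instance $s$ together with the short arc $l^i$–$w_1^i$–$r^i$ forms an even cycle covering $w_1^i$, while $t$ together with the complementary (odd) arc plus exactly one of $l^i,r^i$ forms an even cycle covering the lower interior. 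A naive per-copy use of these cycles overloads the identified vertices, so the construction must instead let $s$ and $t$ each lie in \emph{two} even cycle blocks that thread through all seven copies and the intervening $A$-fragments simultaneously; this is permissible precisely because in a good even cactus block degree $2$ (hence degree $4$) is allowed at $s$ and $t$. I would then verify evenness, connectedness, that every other vertex keeps block degree at most $2$, and that the identified vertices sit at block degree exactly $2$ — a finite check guided by the plane embedding.

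\textbf{Non-existence with maximum degree three.} Suppose $K$ is a spanning good even cactus of $G(C_n)$ with $\Delta(K)\le 3$. Two observations drive the argument. First, since every cycle block is even, $K$ is bipartite; let $(X,Y)$ be the bipartition, so every edge of $K$ is bichromatic and each odd cycle $C_n^i$ must omit at least one edge from $K$. Second, a vertex lying in two cycle blocks would have degree at least $4$; hence under $\Delta(K)\le 3$ every vertex — in particular $s$ and $t$ — lies in \emph{at most one} cycle block, and $\deg_K(s),\deg_K(t)\le 3$. The core is a capacity count at $s$, $t$ and the identified vertices. For each $i$, covering $w_1^i$ (whose only $K$-neighbours lie in $\{l^i,r^i,s\}$) together with the lower path of $C_n^i$, while respecting $b_K(l^i),b_K(r^i)\le 2$ and the fact that $C_n^i$ cannot be a block, forces $K$ to consume cycle-block or edge-block capacity at $s$, at $t$, or at the identified vertices. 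The plan is to show that the total available capacity is too small for all seven copies: the contrast with the existence proof is exactly that block degree $2$ at $s$ and $t$ permits two cycle blocks each, whereas $\Delta(K)\le 3$ caps them at one cycle block apiece, which the seven odd cycles overrun.

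\textbf{Main obstacle.} The hard part is the non-existence half, and specifically converting the informal capacity count into a rigorous case analysis. I would organise it by classifying, for each $i$, how $K$ meets the interior of $C_n^i$: a pendant $w_1^i$ off $l^i$ or $r^i$, a through-path $l^i$–$w_1^i$–$r^i$ that (being in an even cactus) must close via $s$ or $t$ rather than around the odd cycle, or an edge $sw_1^i$ — in each case tracking the induced block degrees at $u_3^i=l^i$, $u_1^{i+1}=r^i$, $s$ and $t$. The genuinely subtle point is that, unlike in the existence proof, I may not assume $K$ restricts to $K_A$ on each $A^i$; I would therefore first extract from the designed fragment $A$ the structural fact that any spanning even cactus must branch inside each $A^i$, so that no single cycle block can sweep the whole chain and the apices' one-cycle-block budget cannot be amortised. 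Establishing and exploiting these properties of $A$ (and $K_A$), and showing that the length $2n+3$ makes the parity obstruction unavoidable, is where the real work lies and is what makes the implication sharp.
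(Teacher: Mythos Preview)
Your existence sketch is in the right spirit --- install $K_A$ on each $A^i$ and patch the $C_n^i$ using arcs together with $s,t$ --- but it rests on a misreading of $C_n$: for $n\ge 1$ the vertex $w_1^i$ is adjacent to $l^i$ and $w_2^i$, not to $r^i$, so there is no ``short arc $l^i$--$w_1^i$--$r^i$''. The paper simply writes down an explicit subgraph (removing one or two specified edges from each $C_n^i$ and adding eight named $s$- and $t$-edges) and checks it; once you correct the picture of $C_n$ you can do the same.

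The non-existence half has a genuine gap, and it is not just the case analysis you flag as the obstacle. Your proposed lever --- a capacity count at $s$ and $t$ driven by the seven odd cycles --- does not bite, because each $C_n^i$ can in fact be spanned by arcs attached at $l^i$ and $r^i$ without touching $s$ or $t$ at all; the odd length only prevents $C_n^i$ from being a single cycle block, not from being covered by two pendant paths. What actually forces the contradiction lives in the $A$-fragments, and specifically at the eight middle vertices $u_2^1,\dots,u_2^8$: the designed gadget $A$ has the property (your Lemma~\ref{lem:II}) that no spanning even cactus of $A$ can be $P$-good for an edge path $P$ from $u_1$ to $u_3$. You gesture at ``branching inside each $A^i$'' but never isolate this statement, and without it the argument cannot close.

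The paper's mechanism is quite different from a degree budget at $s,t$. One \emph{deletes} $s$ and $t$ from $K$ and classifies the resulting components: under $\Delta(K)\le 3$ there are at most four of them, with a controlled pattern of good/$1$-good/$2$-good cacti (Lemma~\ref{lem:3or4max3}). Each component $Q_j$ covers some interval $G^-[l^{a(j)},r^{b(j)}]$, and Lemmas~\ref{lem:good}--\ref{lem:2good} (which package Lemmas~\ref{lem:II} and~\ref{lem:O}) guarantee that $Q_j$ leaves at least $b(j)-a(j)-c$ ``bags'' unspanned, with $c\in\{1,2,3\}$ according to its type. Since the eight vertices $u_2^i$ force $\sum_j(b(j)-a(j))\ge 8$, this yields at least six components of $K-s-t$, contradicting the four-component bound. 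The count is over the eight $A$-copies, not the seven $C_n$-copies, and the odd length of $C_n$ enters only through the trivial Lemma~\ref{lem:O}. I would abandon the capacity heuristic and rebuild the argument around $K-s-t$ and the obstruction at $u_2$.
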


\begin{theorem} \label{thm:no}
	For any positive integer $n$, the $3$-connected planar graph $G(D_n)$ is prism-hamiltonian but has no spanning good even cactus.
\end{theorem}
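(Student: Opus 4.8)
The plan is to establish Theorem~\ref{thm:no} in two independent halves: that $G(D_n) \square K_2$ is hamiltonian, and that $G(D_n)$ has no spanning good even cactus. I expect the first half to be a careful but essentially routine explicit construction, and the second half to carry the real difficulty.

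For prism-hamiltonicity I would build a Hamilton cycle of $G(D_n)\square K_2$ piece by piece along the spine $A^1, D_n^1, A^2, \dots, D_n^7, A^8$. Each $A^i$ carries the spanning good even cactus $K_A$ of Figure~\ref{fig:A}, and since a good even cactus is prism-hamiltonian one extracts from it a Hamilton path of $A^i \square K_2$ joining prescribed level-copies of the endvertices $u_1^i$ and $u_3^i$; the $P$-good and $\{P_1,P_2\}$-good cacti introduced above are exactly the bookkeeping device for recording which levels occur at the attachment vertices and for splicing the pieces at the identifications $u_3^i=l^i$ and $r^i=u_1^{i+1}$, possibly after applying a reflection. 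For each $D_n^i$ the two blocks are odd cycles, but the prism over any cycle is hamiltonian regardless of parity, so $D_n^i \square K_2$ has a Hamilton path between prescribed level-copies of $l^i$ and $r^i$; the odd length is absorbed by one horizontal (same-level) edge on a free arc of each cycle, so \emph{no} apex vertex is needed here to correct parity. Concatenating these prism paths yields a Hamilton path of $G^-(D_n)\square K_2$, and I would close it into a Hamilton cycle by routing it through the four apex copies $(s,\alpha),(s,\beta),(t,\alpha),(t,\beta)$, inserting the rungs $(s,\alpha)(s,\beta)$ and $(t,\alpha)(t,\beta)$ where the path runs along the upper and lower boundaries. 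The contrast with the second half is already the point: in the prism one may freely alternate vertical and horizontal edges, so the odd cycles of $D_n$ are harmless.

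For the second half, suppose for contradiction that $K$ is a spanning good even cactus of $G(D_n)$. Two features of the definition drive everything. First, every cycle block is even, so $K$ is bipartite; in particular no odd cycle of a fragment $D_n^i$ can be a block, and the edges of $K$ lying on such a cycle form a disjoint union of arcs. Second, $b_K(v)\le 2$ for all $v$, which I apply above all to the apices: each of $s,t$ lies in at most two blocks. The heart of the argument is a local claim that $K$ must spend an apex edge on \emph{every} fragment $D_n^i$. Fixing such a fragment, its two odd cycles meet the rest of $G^-$ only at $l^i$, at $r^i$, and at the cut vertex joining the two cycles; since each cycle is cut into arcs while these three vertices must keep block degree at most two yet still link to $A^i$, to $A^{i+1}$, and to the other cycle, a short case analysis on how the arcs may share $l^i$, $r^i$ and the cut vertex shows that the arcs surrounding the cut vertex form a connected central piece whose only possible attachment to the rest of $K$ runs through an edge to $s$ or $t$. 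Hence at least one apex edge is incident with each of the seven fragments, so $\{s,t\}$ carries at least seven edges of $K$.

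The step I expect to cost the most work is turning this into a strict contradiction, and it is here that evenness re-enters. If every apex edge belonged to an edge block, then $b_K(s)+b_K(t)\ge 7$, contradicting $b_K(s),b_K(t)\le 2$; so some apex must instead lie on a cycle block, which lets a single block of high degree serve several fragments at once. To exclude this I would prove a parity lemma: any cycle of $G$ through $s$ or through $t$ that reaches into a fragment $D_n^i$ and returns is forced to traverse an odd portion of one of the length-$2n+3$ cycles, hence has odd length and cannot be an even block of $K$. Consequently the apices may attach to the fragments only through edge blocks, $b_K(s),b_K(t)\le 2$ gives at most four such attachments, and four apex attachments cannot cover the seven fragments that each demand one — the contradiction that kills $K$. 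Verifying this parity statement, together with the case analysis isolating the central arcs of each $D_n^i$ (and checking that the parameters $8$ and $7$ of the construction make the count strict), is the technical core I would need to complete.
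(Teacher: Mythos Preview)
Your sketch for prism-hamiltonicity is in the right spirit, though the paper's explicit cycle does route apex edges into four of the seven $D_n^i$'s rather than avoiding them; your claim that ``no apex vertex is needed here'' is not what the paper does, and you would still have to verify that a Hamilton path of $D_n\square K_2$ between prescribed level-copies of $l$ and $r$ actually exists (recall that $l$ and the cut vertex are not adjacent in the first odd cycle, so the naive concatenation of two cycle-prism paths does not immediately work).

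The real problem is in the second half. Your parity lemma is false. Since $s$ is adjacent to every vertex on the upper boundary path, and three consecutive vertices on the upper arc of one of the odd cycles of $D_n^i$ are adjacent along that arc, the $4$-cycle $s\,w_j^i\,w_{j+1}^i\,w_{j+2}^i\,s$ is an even cycle through $s$ that reaches into $D_n^i$. So even cycle blocks at the apices cannot be excluded by parity, and then your count ``seven fragments versus at most four apex attachments'' has no force: a single even cycle block through $s$ already carries two apex edges while contributing only one to $b_K(s)$, so $b_K(s),b_K(t)\le 2$ is compatible with up to eight apex edges. Your ``short case analysis'' for the local claim is also more delicate than you suggest; for instance, if $K$ restricted to $D_n^i$ is $D_n^i$ minus two edges at the cut vertex, the result is a spanning tree with the cut vertex of degree~$2$, and any obstruction must then come from block-degree overload at $l^i$ or $r^i$ \emph{in combination with what happens inside $A^i$ or $A^{i+1}$}.

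That last point is exactly where your approach and the paper's diverge. You do not use the $A$ fragments at all in the second half, whereas the construction places eight of them precisely because Lemma~\ref{lem:II} (no $P$-good spanning even cactus of $A$) is the engine of the argument. The paper removes $s,t$, classifies the components of $K-s-t$ as good, $1$-good or $2$-good via Lemma~\ref{lem:3or4}, and then counts \emph{bags}---fragments a component traverses but fails to span---using Lemmas~\ref{lem:good}--\ref{lem:2good}; these bag bounds rest on Lemma~\ref{lem:II} for the $A$-copies and Lemma~\ref{lem:O} for the $B$-copies. The oddness of the two cycles of $D_n$ enters only at the very end, to break the single tight subcase (four $1$-good components with no bags) by showing that two such components cannot jointly span the copy of $D_n$ sitting between them. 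A direct seven-versus-four count at the apices, bypassing the $A$-machinery, does not go through.
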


The proofs of Theorems~\ref{thm:atmost3} and~\ref{thm:no} will be given in Sections~\ref{sec:atmost3} and~\ref{sec:no}, respectively. Before that, we provide in Section~\ref{sec:pre} a number of lemmas for proving the results.

\subsection{Preliminaries} \label{sec:pre}

We first discuss some properties of the fragments we use in the construction regarding whether they can contain spanning cacti with specified block degree condition.

\begin{lemma} \label{lem:singleeye}
	Let $I := A[\{u_1, u_2\} \cup \{v_1, \dots, v_{12}\}]$. Let $Q$ be a spanning even cactus of $I$ that contains an edge path $P$ with endvertices $u_1$ and $u_2$. Suppose that every vertex of $Q$ that is not an internal vertex of $P$ has block degree at most two in $Q$. We have that $u_1$ and $u_2$ have block degree two in $Q$.
\end{lemma}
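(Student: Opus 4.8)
The plan is to reduce the statement to the assertion that neither $u_1$ nor $u_2$ is a leaf of $Q$, and then to obtain a contradiction from the presence of such a leaf by combining the even-cycle constraint on $Q$ with the concrete adjacencies of $I$ read off from Figure~\ref{fig:A}.

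First I would record the easy inequalities. Since $u_1$ and $u_2$ are the two endvertices of the edge path $P$, neither is an internal vertex of $P$, so the hypothesis yields $b_Q(u_1) \le 2$ and $b_Q(u_2) \le 2$. On the other hand, the edge of $P$ incident with $u_1$ is, by the definition of an edge path, an edge block of $Q$ containing $u_1$, so $b_Q(u_1) \ge 1$, and likewise $b_Q(u_2) \ge 1$. Writing $\deg_Q(v) = b_Q(v) + c_Q(v)$, where $c_Q(v)$ counts the cycle blocks through $v$, a vertex whose unique block is an edge block satisfies $c_Q(v) = 0$ and hence is a leaf. Thus $b_Q(u_1) = 1$ would force $u_1$ to be a leaf whose only $Q$-edge is the first edge of $P$ (and symmetrically for $u_2$), so it suffices to rule out that either endvertex is a leaf.

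The main step is this leaf exclusion, and here I would argue by contradiction using the structure of $I$. Assume $u_1$ is a leaf; then every edge of $I$ at $u_1$ other than the first edge of $P$ is absent from $Q$, depriving the corresponding neighbours of an incidence that must be recovered elsewhere. Because every cycle block of $Q$ is even, $Q$ contains no odd cycle and is therefore bipartite, and I would use this parity constraint to restrict which even cycles of $I$ can appear and how they can meet $P$. The plan is to propagate the forced choices starting from a fixed low-degree neighbour of $u_1$: since $Q$ is spanning, each of $v_1,\dots,v_{12}$ must be covered, and compensating for the missing incidences at $u_1$ forces even-cycle blocks through nearby $v_i$. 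I expect each admissible propagation to culminate in one of three contradictions — a vertex outside $P$ acquiring block degree three, a cycle block of odd length, or a vertex $v_i$ that cannot be reached by $Q$ — and a symmetric or analogous argument then handles $u_2$.

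The hard part will be the case analysis in this last step: the even-cycle blocks of $Q$ can in principle attach to the vertices $v_1,\dots,v_{12}$ in several configurations, and one must test each placement against all three constraints simultaneously (spanning, evenness, and block degree at most two off $P$). I would organise the cases according to the block of $Q$ containing a fixed neighbour of $u_1$ and follow the forced propagation to a contradiction in each. Since the global edge count $|E(Q)| = 13 + (\text{number of cycle blocks})$ leaves the local incidences at $u_1$ undetermined, no counting argument suffices, and the proof is inherently a finite structural verification driven by the adjacency pattern of $I$.
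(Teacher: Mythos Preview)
Your plan is essentially the paper's: reduce to showing neither $u_1$ nor $u_2$ is a leaf of $Q$, then derive a contradiction by propagating forced incidences under the spanning, even-cycle, and block-degree constraints. The one concrete step the paper adds that you should incorporate is an initial symmetry reduction: since $I-\{u_1,u_2\}$ splits into the two isomorphic halves on $\{v_1,\dots,v_6\}$ and $\{v_7,\dots,v_{12}\}$, the edge path $P$ lies entirely in one of them, so one may assume $P\subseteq I[\{u_1,u_2,v_7,\dots,v_{12}\}]$ and carry out the whole case analysis on the six vertices $v_1,\dots,v_6$, all of which then have block degree at most two---this is what keeps the verification to a few lines rather than a sprawling check over all twelve $v_i$.
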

\begin{proof}
	By symmetry, we may assume that $P$ is contained in $I [\{u_1, u_2\} \cup \{v_7,  \dots, v_{12}\}]$. Suppose that $b_Q(u_1) = 1$ (reductio ad absurdum). It follows immediately that $u_1 v_1 \notin E(Q)$, $u_1 v_6 \notin E(Q)$, and $v_1 v_2$ and $v_2 v_3$ are two edge blocks of $Q$ (as $Q$ is a spanning cactus). If $v_4$ is contained in any cycle block of $Q \subset I - \{u_1 v_1, u_1 v_6\}$, then that block must be $v_3 v_4 v_5 u_2 v_6 v_3$, which is however impossible since $Q$ is an even cactus. By the same argument, we have that $v_5$ and $v_6$ are not in any cycle block in $Q$.
	
	Note that $Q$ has at least one edge of $v_3 v_6$ and $v_6 u_2$ and at least one edge of $v_3 v_4$ and $v_5 u_2$, since $Q$ is a connected spanning subgraph of $I$. We consider the following two cases. If both $v_3 v_6$ and $v_6 u_2$ are in $E(Q)$, then (depending on $v_3 v_4$ or $v_5 u_2$ is an edge of $Q$) $v_3$ or $u_2$ has to have block degree at least three in $Q$, contradicting the given condition. If exactly one of $v_3 v_6, v_6 u_2$ is in $E(Q)$, then $Q$ must contain $v_3 v_4 v_5 u_2$ as an edge path and $v_3$ or $u_2$ will have block degree at least three in $Q$, again, a contradiction. We thus conclude that $b_Q(u_1) = b_Q(u_2) = 2$.
\end{proof}

\begin{lemma} \label{lem:II}
	Let $Q$ be an even cactus of $A$ that contains an edge path $P$ with endvertices $u_1$ and $u_3$. If $Q$ is $P$-good, then $Q$ is no spanning subgraph of $A$.
\end{lemma}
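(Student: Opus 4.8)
The plan is to argue by contradiction and reduce everything to Lemma~\ref{lem:singleeye} applied separately to the two ``eyes'' making up $A$. The structure of $A$ (Figure~\ref{fig:A}) is that of two copies of the eye $I$ of Lemma~\ref{lem:singleeye} sharing the single vertex $u_2$, so that $u_2$ is a cut vertex separating the lower eye $I_1 = I$ (with poles $u_1, u_2$) from the upper eye $I_2$ (with poles $u_2, u_3$), the latter being a copy of $I$ under the up--down symmetry that swaps $u_1$ and $u_3$ and fixes $u_2$. Suppose, for contradiction, that $Q$ is a spanning $P$-good even cactus of $A$ where $P$ is an edge path with endvertices $u_1$ and $u_3$. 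Since $u_1 \in I_1$, $u_3 \in I_2$ and $u_2$ is a cut vertex, the path $P$ must pass through $u_2$; hence $u_2$ is an internal vertex of $P$, and $P$-goodness gives $b_Q(u_2) \le 3$. This is the inequality I aim to contradict.

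Next I would localize $Q$ to each eye. Writing $Q_j := Q[I_j]$, the fact that $u_2$ is a cut vertex of $A$ forces every block of $Q$ to lie inside a single eye, so each $Q_j$ is a connected spanning even cactus of $I_j$ and the block degrees add up: $b_Q(u_2) = b_{Q_1}(u_2) + b_{Q_2}(u_2)$, with $b_{Q_j}(u_2) \ge 1$ for $j \in \{1,2\}$ since $Q_j$ is connected and spans $I_j \ni u_2$. The subpath $P_j := P \cap I_j$ is an edge path of $Q_j$ (running from $u_1$ to $u_2$ for $j=1$, and from $u_2$ to $u_3$ for $j=2$); and because $P$ enters and leaves $I_j$ only at its poles, the internal vertices of $P$ lying in $I_j$ are exactly the internal vertices of $P_j$.

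The core step is to show $b_{Q_j}(u_2) = 2$ for each $j$, which I would establish via a short dichotomy. If $b_{Q_1}(u_2) \ge 3$, then already $b_Q(u_2) \ge b_{Q_1}(u_2) + b_{Q_2}(u_2) \ge 3 + 1 = 4$, contradicting $b_Q(u_2) \le 3$; so I may assume $b_{Q_1}(u_2) \le 2$. In that case I would verify the hypotheses of Lemma~\ref{lem:singleeye} for $Q_1$ on $I_1 = I$ with the edge path $P_1$: the endpoint $u_1$ satisfies $b_{Q_1}(u_1) \le b_Q(u_1) \le 2$ because $u_1$ is an endvertex (not an internal vertex) of $P$; the endpoint $u_2$ satisfies $b_{Q_1}(u_2) \le 2$ by the case assumption; and any other vertex $v$ of $I_1$ that is not internal to $P_1$ is not internal to $P$ either, so $b_{Q_1}(v) \le b_Q(v) \le 2$ by $P$-goodness. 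Lemma~\ref{lem:singleeye} then yields $b_{Q_1}(u_2) = 2$. Running the same argument on the upper eye (with $u_3$ playing the role of $u_1$ and using $b_Q(u_3) \le 2$) gives $b_{Q_2}(u_2) = 2$. Adding these, $b_Q(u_2) = 2 + 2 = 4 > 3$, the desired contradiction, so no spanning $P$-good even cactus of $A$ can exist.

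I expect the only delicate point to be the bookkeeping at the shared pole $u_2$: one must keep track of the fact that $u_2$ is internal to $P$ (hence only constrained to block degree $\le 3$, not $\le 2$), while Lemma~\ref{lem:singleeye} demands block degree $\le 2$ at the pole $u_2$ within each eye. The dichotomy above is precisely what reconciles the two constraints --- either $u_2$ already has block degree $\ge 3$ in one eye (an immediate contradiction once the other eye contributes its unavoidable incident block), or it has block degree $\le 2$ there and Lemma~\ref{lem:singleeye} upgrades this to exactly $2$ in \emph{both} eyes. Checking that each $Q_j$ genuinely inherits the even-cactus and block-degree data from $Q$ under restriction, and that $P$ splits cleanly at the cut vertex $u_2$, is routine but is where I would be most careful.
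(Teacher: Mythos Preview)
Your proof is correct and follows essentially the same approach as the paper: split $A$ at the cut vertex $u_2$ into the two eyes $I_1,I_2$, restrict $Q$ and $P$ to each, apply Lemma~\ref{lem:singleeye} on both sides, and add the resulting block degrees at $u_2$ to reach the contradiction $b_Q(u_2)=4$. The only cosmetic difference is that the paper simply asserts the hypothesis of Lemma~\ref{lem:singleeye} holds at $u_2$ in each $Q_j$, whereas you justify it via your dichotomy; note that your dichotomy can be streamlined to the direct observation $b_{Q_1}(u_2)=b_Q(u_2)-b_{Q_2}(u_2)\le 3-1=2$ (and symmetrically for $Q_2$).
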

\begin{proof}
	Suppose that $Q$ is a spanning subgraph of $A$ (reductio ad absurdum). Let $I_1 := A[\{u_1, u_2\} \cup \{v_1, \dots, v_{12}\}]$ and $I_2 := A - \{u_1\} \cup \{v_1, \dots, v_{12}\}$.
	Then, for $i \in \{1, 2\}$, $Q_i := Q[I_i]$ is a spanning even cactus of $I_i$ having $P_i := P[I_i]$ as an edge path with endvertices $u_i$ and $u_{i + 1}$. It is also clear that every vertex of $Q_i$ that is not an internal vertex of $P_i$ is contained in at most two blocks in $Q_i$. Therefore we can apply Lemma~\ref{lem:singleeye} twice to conclude that $b_{Q_1}(u_2) = b_{Q_2}(u_2) = 2$ and hence $b_Q(u_2) = 4$, contradicting our assumption that $Q$ is $P$-good.
\end{proof}

\begin{lemma} \label{lem:O}
	Let $Q$ be a good even cactus of $B$ that contains vertices $l$ and $r$, where $B$ is taken to be $C_n$ or $D_n$ for some positive integer $n$. If $l$ and $r$ have block degree one, then $Q$ is no spanning subgraph of $B$.
\end{lemma}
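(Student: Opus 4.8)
The plan is to reduce the statement to a question about Hamilton paths. First I would record that both $C_n$ and $D_n$ have only \emph{odd} cycle blocks (each of length $2n+3$), and that in a cactus every cycle is a block; hence neither $C_n$ nor $D_n$ contains an even cycle as a subgraph at all. Consequently, if $Q$ is a good even cactus with $Q \subseteq B$, then $Q$ can have no cycle block (an even cycle block of $Q$ would be an even cycle subgraph of $B$, which does not exist), so $Q$ is a tree. Since $Q$ is \emph{good}, in a tree every edge is a block and so $b_Q(v) = \deg_Q(v) \le 2$ for every $v$; therefore $Q$ is a path. The hypothesis $b_Q(l) = b_Q(r) = 1$ then forces $l$ and $r$ to be the two endvertices of this path. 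Thus it suffices to prove that $B$ admits no Hamilton path whose endvertices are exactly $l$ and $r$: if such a spanning $Q$ existed it would be precisely such a Hamilton path.

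For $B = C_n$ this is immediate: a Hamilton path of a cycle is obtained by deleting a single edge, so its two endvertices are adjacent on the cycle, whereas $l$ and $r$ are non-adjacent in $C_n$ (Figure~\ref{fig:CD}(a)). For $B = D_n$ I would analyse how a Hamilton path must traverse the cut vertex $m$ shared by the two odd cycles. No Hamilton path can have $m$ as an endvertex, for then the path would use only one edge at $m$ and could never reach the second cycle, which is joined to the rest of $D_n$ only through $m$; for the same reason the two path-edges at $m$ cannot both lead into the same cycle. Hence any Hamilton path of $D_n$ splits at $m$ into one segment inside each cycle, and the segment inside a given cycle is a Hamilton path of that cycle ending at $m$, whose far endvertex is therefore a neighbour of $m$. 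So both endvertices of any Hamilton path of $D_n$ are neighbours of $m$, one in each cycle. Since $l$ and $r$ are the outer terminals of $D_n$ and are not adjacent to $m$ (Figure~\ref{fig:CD}(b)), neither can be an endvertex of a Hamilton path, and in particular $l$ and $r$ cannot both be endvertices, the desired contradiction.

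Two points deserve care and constitute the only real obstacles. The first, easy to overlook, is that it is the \emph{goodness} of $Q$ (the bound $b_Q \le 2$), not merely its being an even cactus, that collapses $Q$ from an arbitrary spanning tree down to a path; without this one could simply delete an edge at $l$ and an edge at $r$ to obtain a spanning tree with $l$ and $r$ as leaves, so the odd-cycle parity alone does not suffice. The second, the actual crux, is the rigidity argument for $D_n$: one must check carefully that a Hamilton path is forced through $m$ with one endvertex in each cycle, and read off from Figure~\ref{fig:CD}(b) that $l$ and $r$ lie away from $m$ so that neither is a neighbour of $m$. I note finally that if the two cycles of $D_n$ are instead joined by a bridge rather than at a single cut vertex, the same argument applies verbatim with the two attachment vertices of the bridge playing the role of $m$.
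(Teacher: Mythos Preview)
Your argument is correct and follows exactly the approach in the paper: the parity of the cycle blocks of $B$ forces $Q$ to be acyclic, goodness forces $Q$ to be a path, and the block-degree hypothesis places $l,r$ at its ends, reducing the claim to the nonexistence of a Hamilton $l$--$r$ path in $B$. The paper dismisses this last step with ``In particular, $Q$ is no spanning subgraph of $B$'', whereas you spell out explicitly for both $C_n$ and $D_n$ why no such Hamilton path exists; your added detail (endvertices of a Hamilton path of a cycle are adjacent, the cut-vertex analysis for $D_n$) is sound and matches the intended reading of Figures~\ref{fig:CD}(a)--(b).
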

\begin{proof}
	It is readily to see that $Q$ has no cycle block and hence is an edge path with endvertices $l, r$. In particular, $Q$ is no spanning subgraph of $B$.
\end{proof}

As mentioned before, one cannot guarantee that the components are good cacti after removing some vertices from a good cactus. However, we may characterize, in terms of good, 1-good and 2-good cacti, the components of the graph obtained from some spanning good cactus of $G(B)$ by deleting vertices $s$ and $t$.

\begin{lemma} \label{lem:3or4max3}
	Let $K$ be a good cactus with maximum degree at most three and $s, t$ be two vertices in $K$. One of the following statements holds:
	\begin{enumerate} [label={(\Roman*)}]
		\item $K - s - t$ is a vertex-disjoint union of at most four cacti, at most two of which are $1$-good and the rest are good.
		\item $K - s - t$ is a vertex-disjoint union of at most three cacti, one of which is $2$-good and the rest are good.
	\end{enumerate}
\end{lemma}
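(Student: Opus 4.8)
The plan is to delete $s$ and then $t$, tracking how the block structure degrades. I would first record two elementary facts. Since removing vertices cannot place an edge into a new cycle, every connected subgraph of a cactus is again a cactus; in particular each component of $K-s-t$ is a cactus. Moreover, deleting a vertex $x$ from a connected cactus leaves exactly $b(x)$ components, and if $x$ lies in a cycle block $C$ then $C$ becomes a path whose edges are now edge blocks, so the two neighbours of $x$ on $C$ keep their block degree while each interior vertex of the resulting path (an \emph{arc}) gains exactly one. Hence one deletion raises any other vertex's block degree by at most one, and only along such arcs.

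The degree hypothesis enters decisively here: because the maximum degree of $K$ is at most three and $b_K(\cdot)\le 2$, neither $s$ nor $t$ can lie in two cycle blocks, as two cycle blocks at a vertex force degree at least four. Thus $s$ lies in at most one cycle block (and otherwise in edge blocks), and likewise $t$. Deleting $s$ and $t$ therefore destroys at most two cycle blocks, and a short check shows that at most two arcs appear in $K-s-t$; all other block degrees remain at most two, arc interiors reach at most three, and block degree four occurs only where two arcs overlap.

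Next I would classify each component by its number of arcs. A component with no arc is good. A component with a single arc $P$ is $P$-good, hence $1$-good, because only the interior of $P$ climbs to block degree three. A component with two arcs must receive them from two distinct cycle blocks, since the two arcs cut from one cycle through both $s$ and $t$ always land in different components; as distinct cycle blocks of a cactus meet in at most one vertex $w$, I would take the two arcs as $P_1,P_2$ and verify, across the sub-cases where $w$ is interior to both arcs (block degree four), to one (three), to neither, or absent, that the component is $\{P_1,P_2\}$-good, hence $2$-good.

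Finally I would count. Writing the number of components of $K-s-t$ as $(b_K(s)-1)+b_C(t)$, where $C$ is the component of $K-s$ containing $t$, and using $b_C(t)\le b_K(t)+1\le 3$, bounds it by four. If the (at most two) arcs lie in distinct components, at most two components are $1$-good and the rest good, which is case~(I); if they lie in a common component, that component is $2$-good, and since its two arcs then stem from distinct cycles one has $b_C(t)=b_K(t)\le 2$, so there are at most three components with the rest good, which is case~(II); having fewer than two arcs falls under~(I) at once. I expect the bookkeeping inside the two-arc component to be the main obstacle: confirming that arcs cut from a single cycle always separate, that two distinct cycle blocks share at most one vertex, and that each overlap pattern at that vertex still meets the exact block-degree thresholds in the definition of a $2$-good cactus.
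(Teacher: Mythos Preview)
Your argument is correct and follows essentially the same strategy as the paper: use the degree bound to ensure each of $s,t$ lies in at most one cycle block, track the resulting ``arcs'' (broken cycles), and split according to whether the two arcs land in the same component or not. One simplification you miss that the paper exploits: since $K$ has maximum degree three and $b_K(\cdot)\le 2$, \emph{no} two distinct cycle blocks of $K$ can intersect (a shared vertex would have degree at least four). Hence in the two-arc-same-component case the arcs $C_s-s$ and $C_t-t$ are automatically vertex-disjoint, and your sub-case analysis over a hypothetical shared vertex $w$ is unnecessary. Also, your justification that $b_C(t)=b_K(t)$ in that case should really read: if the two arcs lie in the same component of $K-s-t$ then $C_s-s$ and $t$ lie in the same component of $K-s$, which forces $st$ not to be an edge block (else $s$ would be a cut vertex separating $C_s-s$ from $t$), and therefore deleting $s$ removes no block at $t$. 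With that clarification your count of at most three components goes through.
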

\begin{proof}
	As $K$ is a good cactus with maximum degree at most three, no two distinct cycle blocks in $K$ can intersect. 
	
	We now consider the following cases.
	If $s$ and $t$ are in some cycle block $S$ in $K$, then $K - s - t$ is comprised of $b_K(s) + b_K(t) - 2$ good cacti and $k$ 1-good cacti, where $k$ is the number of components of $S - s - t$. As every vertex in $K$ has block degree at most two, $K - s - t$ has at most two good cactus components, at most two 1-good cactus components and no other components. So we are in Case~(I). 
	If $s$ and $t$ are in two cycle blocks $S_s$ and $S_t$ in $K$, respectively, such that $S_s - s$ and $S_t - t$ are in the same component of $K - s - t$, then we are in Case~(II) as $K - s - t$ is comprised of one 2-good cactus and $b_K(s) + b_K(t) - 2 \le 2$ good cacti.
	Otherwise, it is not hard to see that $K - s - t$ has up to three components, at most two of which are 1-good cacti and the rest are good cacti, which is included in Case~(I).
\end{proof}

The following lemma can be proved analogously, thus we omit the proof.

\begin{lemma} \label{lem:3or4}
	Let $K$ be a good cactus and $s, t$ be two vertices in $K$. One of the following statements holds:
	\begin{enumerate} [label={(\Roman*)}]
		\item $K - s - t$ is a vertex-disjoint union of at most four cacti, each of which is good or $1$-good.
		\item $K - s - t$ is a vertex-disjoint union of at most three cacti, one of which is $2$-good and each of the rest is good or $1$-good.
	\end{enumerate}
\end{lemma}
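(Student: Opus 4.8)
The plan is to follow the template of the proof of Lemma~\ref{lem:3or4max3}, but now without the luxury that distinct cycle blocks are vertex-disjoint; in a general good cactus two cycle blocks may share a (cut) vertex, and it is precisely this phenomenon that forces the $2$-good alternative and condition~(iii). I would begin with two elementary observations. First, deleting a single vertex $v$ from a good cactus yields exactly $b_K(v)\le 2$ components: an edge block at $v$ contributes its pendant component, while a cycle block at $v$ becomes an edge path and keeps everything attached to it in a single component. Hence $K-s$ has $b_K(s)$ components, and deleting $t$ from the one containing it splits that further. Second, for a vertex $w\neq s,t$ one has that $b_{K-s-t}(w)$ equals $b_K(w)$ plus the number of cycle blocks through $w$ that are \emph{opened}, i.e.\ that contain $s$ or $t$ and therefore degenerate into an edge path; since $b_K(w)\le 2$, the value $4$ is attained only when $w$ lies on two distinct opened cycles, which forces those cycles to meet at $w$.

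I would then classify each component $Z$ of $K-s-t$ by the number of opened-cycle arcs it contains. If $Z$ contains no such arc it is good; if it contains exactly one arc, that arc is the required edge path and $Z$ is $1$-good, because no vertex of $Z$ can lie on two opened cycles (otherwise both arcs would lie in $Z$), so no block degree reaches $4$; if it contains two arcs, these are edge paths sharing at most one vertex (two blocks of a cactus meet in at most one vertex), and a shared vertex necessarily has block degree $4$, exactly as demanded by condition~(iii), so $Z$ is $2$-good. The whole task thus reduces to two structural claims: that each component carries at most one arc from a cycle through $s$ and at most one from a cycle through $t$ (hence at most two arcs), and that at most one component carries both an $s$-arc and a $t$-arc. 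The first holds because two cycles through the common cut vertex $s$ are separated once $s$ is removed, so their arcs never share a component; the second holds because, when $s$ and $t$ lie in no common block, exactly one block at $s$ lies on the block-tree path toward $t$, so at most one $s$-cycle can land in the component of $K-s$ containing $t$.

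Finally I would organize the case analysis exactly as in Lemma~\ref{lem:3or4max3}. If $s$ and $t$ share a block, that block is either the edge $st$, leaving only the at most two pendant components, or a common cycle $S$, yielding at most two arcs of $S$ in separate components together with at most one further arc on each of the remaining blocks at $s$ and at $t$; in every such case each component carries at most one arc, there are at most four components, and we land in Case~(I). If $s$ and $t$ share no block, the estimate $(b_K(s)-1)+b_K(t)\le 3$ bounds the number of components, at most one of which is $2$-good by the two claims, giving Case~(II) when a $2$-good component occurs and Case~(I) otherwise. The main obstacle, and the only genuinely new point beyond Lemma~\ref{lem:3or4max3}, is the bookkeeping for components that may receive two arcs: one must verify both that no component ever accumulates a third arc and that a block-degree-$4$ vertex is always the unique shared vertex of its two arcs, so that condition~(iii) of $2$-goodness is satisfied rather than violated.
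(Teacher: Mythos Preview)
Your proposal is correct and is precisely the kind of argument the paper has in mind: the paper itself omits the proof entirely, stating only that it ``can be proved analogously'' to Lemma~\ref{lem:3or4max3}. Your write-up follows that template and supplies the extra bookkeeping (arcs of opened cycles, the block-tree path between $s$ and $t$, and the verification of condition~(iii) for the unique possible shared vertex of two arcs) that the dropped maximum-degree hypothesis makes necessary; nothing further is required.
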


Suppose $G$ has a spanning subgraph $K$ that is a good even cactus. Let $Q$ be a component of $K - s - t$. Let $H$ be any copy of $A$ or $B$ in $G^-$. We say $H$ is a \emph{bag} of $Q$ if $Q$ contains the endvertices of $H$ but not all vertices of $H$, where the endvertices of $A$ (respectively, $B$) are $u_1$ and $u_3$ (respectively, $l$ and $r$). For $i < 1$, we define $l^i$ and $r^i$ to be $u_1^1$; for $i > 7$, we define $l^i$ and $r^i$ to be $u_3^8$. We may choose $0 \le a \le b \le 8$ such that $Q$ is contained in $G^-[l^a, r^b]$ and, subject to this, $b - a$ is minimum. The following three lemmas gives us lower bound on the number of bags of $Q$.

	\begin{lemma} \label{lem:good}
		If $Q$ is a good even cactus, then it has at least $b - a - 1$ bags.
	\end{lemma}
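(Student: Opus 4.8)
The plan is to exhibit $b-a-1$ bags explicitly, taking them to be the copies of $B$ that lie strictly inside $G^-[l^a,r^b]$. Recall that $G^-[l^a,r^b]$ is the block-path $B^a, A^{a+1}, B^{a+1}, \dots, A^b, B^b$ (with the obvious modifications when $a=0$ or $b=8$, where the outermost fragment is a copy of $A$), in which consecutive fragments meet at the cut vertices $r^i=u_1^{i+1}$ and $l^{i+1}=u_3^{i+1}$. I single out the copies $B^{a+1},\dots,B^{b-1}$; there are exactly $b-a-1$ of them, and the crucial feature is that both endvertices of each such $B^i$, namely $l^i=u_3^i$ and $r^i=u_1^{i+1}$, are cut vertices of $G^-[l^a,r^b]$ lying between $l^{a+1}$ and $r^{b-1}$ in the left-to-right order.

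First I would use the minimality of $b-a$ to pin $Q$ on both sides of these cut vertices. Since $Q\not\subseteq G^-[l^{a+1},r^b]$ and $Q\not\subseteq G^-[l^a,r^{b-1}]$, the cactus $Q$ contains a vertex strictly to the left of $l^{a+1}$ and a vertex strictly to the right of $r^{b-1}$. Consequently, for each endvertex $v$ of each $B^i$ with $a+1\le i\le b-1$, the cactus $Q$ has vertices on both sides of the cut vertex $v$; as $v$ separates these vertices in $G^-$, we get $v\in V(Q)$ and $Q$ uses $v$ to connect the two sides, so $Q$ contributes at least one block at $v$ on each side. In particular $Q[B^i]$, being a union of blocks of $Q$ and hence itself a good even cactus, satisfies $b_{Q[B^i]}(l^i)\ge 1$ and $b_{Q[B^i]}(r^i)\ge 1$, while $b_Q(l^i)\le 2$ and $b_Q(r^i)\le 2$ force $b_{Q[B^i]}(l^i)=b_{Q[B^i]}(r^i)=1$.

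With the block degrees pinned down, the conclusion follows from Lemma~\ref{lem:O}. If some $B^i$ with $a+1\le i\le b-1$ were entirely contained in $Q$, then $Q[B^i]$ would be a spanning good even cactus of $B$ containing $l^i$ and $r^i$, each of block degree one, which is exactly the configuration ruled out by Lemma~\ref{lem:O}. Hence no such $B^i$ is fully contained; since both its endvertices lie in $Q$, each $B^i$ is a bag of $Q$. This produces $b-a-1$ distinct bags, as required, and the cases $b-a\le 1$ are vacuous.

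The main obstacle is bookkeeping rather than any genuine difficulty. I must verify that the endvertices of $B^{a+1},\dots,B^{b-1}$ really are interior cut vertices that $Q$ is forced to straddle, which is precisely what the minimality of $b-a$ delivers through the index window $a+1\le i\le b-1$; check that this window yields exactly $b-a-1$ interior copies of $B$ uniformly, including the boundary cases $a=0$ and $b=8$ (where $B^a$ or $B^b$ is either absent or non-interior, but the count is unaffected); and confirm that $Q[B^i]$ inherits the good-even-cactus property from $Q$, which holds because $V(B^i)$ attaches to the rest of $G^-$ only through $l^i$ and $r^i$.
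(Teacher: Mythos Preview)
Your proof is correct and follows essentially the same approach as the paper: you single out the interior copies $B^{a+1},\dots,B^{b-1}$, use the minimality of $b-a$ to force $Q$ to have a block at $l^i$ on the $A^i$-side (so that $b_{Q[B^i]}(l^i)=1$, and likewise at $r^i$), and then invoke Lemma~\ref{lem:O} to conclude that $Q[B^i]$ cannot span $B^i$, making each such $B^i$ a bag. The paper's proof is terser but structurally identical; your additional remarks on connectedness of $Q[B^i]$ and the boundary cases are sound and only make explicit what the paper leaves to the reader.
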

	\begin{proof}
		For every $a < i < b$, $Q [B^i]$ is an even cactus of $B^i$ containing $l^i$ and $r^i$. By the minimality of $b - a$, $l^i$ is contained in some block of $Q [A^i]$ and hence has block degree one in $Q [B^i]$. Similarly, we have $b_{Q[B^i]} (r^i) = 1$. As vertices in $Q [B^i]$ other than $l^i, r^i$ has block degree at most two, we may apply Lemma~\ref{lem:O} to conclude that $Q [B^i]$ does not span $B^i$ and hence $B^i$ is a bag of $Q$. Collecting all these bags for every $a < i < b$, we conclude that $Q$ has $b - a - 1$ bags.
	\end{proof}

	\begin{lemma} \label{lem:1good}
		Suppose $Q$ is a $P$-good even cactus for some edge path $P$ in $Q$. We have that $Q$ has at least $b - a - 2$ bags. Moreover, if $Q$ has no bag and $b = a + 2$, then $P$ is contained in $G^-[r^a, l^b]$.
	\end{lemma}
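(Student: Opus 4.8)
The plan is to mirror the proof of Lemma~\ref{lem:good}, while accounting for the single edge path $P$ along which block degree three is allowed. As there, the minimality of $b-a$ gives, for every $a<i<b$, that $l^i,r^i\in V(Q)$ and that $b_{Q[A^i]}(l^i)\ge 1$, $b_{Q[B^i]}(l^i)\ge 1$, $b_{Q[B^i]}(r^i)\ge 1$ and $b_{Q[A^{i+1}]}(r^i)\ge 1$; hence an interior copy $B^i$ fails to be a bag precisely when $Q[B^i]$ spans $B^i$, and I call such an $i$ \emph{full}. The first step is to show that if $i$ is full then $P$ has an internal vertex among $V(B^i)$. Since every cycle block of $B$ has odd length $2n+3$ and $Q$ is an even cactus, $Q[B^i]$ has no cycle block; and since $l^i,r^i$ are cut vertices of $G^-$, $Q[B^i]$ is connected, so it is a spanning tree of $B^i$. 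If some vertex of $B^i$ other than $l^i,r^i$ has block degree three in $Q$, it is an internal vertex of $P$ and the claim follows. Otherwise $Q[B^i]$ is a good even cactus, and as it spans, Lemma~\ref{lem:O} forces one of $l^i,r^i$ to have block degree at least two in $Q[B^i]$; combined with the contribution of the adjacent copy of $A$, that junction has block degree three in $Q$ and is therefore an internal vertex of $P$. For $B=D_n$ the extra block is typically carried by the central vertex of $D_n^i$, which is covered by the first case.

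The second step converts full copies into bags among the copies of $A$. Suppose $i<i'$ are both full. By the first step $P$ has internal vertices $x\in V(B^i)$ and $x'\in V(B^{i'})$, and since $B^i$ lies entirely to the left of $B^{i'}$ in the chain, the subpath of $P$ between $x$ and $x'$ meets every copy $A^h$ with $i<h\le i'$; because $u_1^h$ and $u_3^h$ are the only vertices through which $A^h$ attaches to the rest of $G^-$, this subpath crosses each such $A^h$ as an edge path with endvertices $u_1^h$ and $u_3^h$. Applying Lemma~\ref{lem:II} to $Q[A^h]$, which is $P'$-good for the edge path $P'=P\cap A^h$, shows that $Q[A^h]$ does not span, so each of $A^{i+1},\dots,A^{i'}$ is a bag of $Q$.

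With these two steps the count is immediate. Let $i_1<\dots<i_m$ be the full indices. The $b-a-1$ interior copies of $B$ contribute $(b-a-1)-m$ bags, since every non-full interior copy is a bag, while applying the second step to each consecutive pair $i_k,i_{k+1}$ yields the pairwise disjoint families $\{A^{i_k+1},\dots,A^{i_{k+1}}\}$, for a total of at least $i_m-i_1\ge m-1$ further bags among the copies of $A$. Hence $Q$ has at least $(b-a-1)-m+(m-1)=b-a-2$ bags, the cases $m\le 1$ being trivial. For the final assertion, assume $b=a+2$ and that $Q$ has no bag. Then the unique interior copy $B^{a+1}$ is full, so by the first step $P$ has an internal vertex in $V(B^{a+1})$. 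If $P$ reached strictly to the left of $r^a=u_1^{a+1}$ it would, together with this internal vertex, cross $A^{a+1}$ as an edge path from $u_1^{a+1}$ to $u_3^{a+1}$, making $A^{a+1}$ a bag by Lemma~\ref{lem:II}; symmetrically $P$ cannot reach strictly to the right of $l^b=u_3^{a+2}$. Therefore $P\subseteq G^-[r^a,l^b]$.

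The main obstacle is the first step: pinning down the structure of a spanning $Q[B^i]$ and locating a block-degree-three, hence $P$-internal, vertex inside $B^i$. This is where the parity of the cycle lengths and Lemma~\ref{lem:O} are essential, and where $C_n$ and $D_n$ must be handled with some care, since for $D_n$ the extra block is carried by the central vertex rather than by a junction. Once this is in place, the remaining steps reduce to Lemma~\ref{lem:II} and the cut-vertex structure of $G^-$.
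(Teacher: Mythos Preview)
Your proof is correct and takes a genuinely different route from the paper's. The paper argues ``top–down'': it fixes a maximal interval $[a',b']$ such that $P[A^{h}]$ is a $u_1^{h}$–$u_3^{h}$ path for each $a'<h\le b'$, and then uses Lemma~\ref{lem:II} to make every $A^{h}$ in this stretch a bag, while the fact that $P\subseteq G^-[r^{a'-1},l^{b'+1}]$ forces the block–degree hypotheses of Lemma~\ref{lem:O} on every $B^i$ with $a<i<a'$ or $b'<i<b$, making those bags as well. Your argument is ``bottom–up'': you look at each interior $B^i$, call it \emph{full} when it is not a bag, use the odd‐cycle structure of $B$ together with Lemma~\ref{lem:O} to locate an internal vertex of $P$ inside every full copy, and then push Lemma~\ref{lem:II} through each $A^h$ lying between two full copies. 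The bookkeeping $(b-a-1)-m+(i_m-i_1)\ge b-a-2$ then replaces the paper's direct count $(b'-a')+(a'-a-1)+(b-b'-1)$.

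Both approaches rest on the same two lemmas, but they allocate the work differently. The paper's version avoids your Step~1 entirely: it never needs to analyse a spanning $Q[B^i]$ or locate an internal vertex of $P$ inside it, because the choice of $[a',b']$ already tells it where $P$ lives. Your version, in exchange, makes the mechanism more transparent—one sees exactly why a non-bag $B^i$ forces the neighbouring $A$'s to become bags—and it adapts cleanly to the ``moreover'' clause. One point worth tightening in your write-up: when the internal vertex produced by Step~1 happens to be a junction $l^i$ or $r^i$, checking that $P\cap A^h$ is genuinely a $u_1^h$–$u_3^h$ path (and not a longer path with $u_3^h$ internal) needs a short extra word; it follows because in that boundary case the block-degree count forces at least one of the two $P$-edges at the junction to lie in $B^i$, so $P$ does have a vertex strictly inside $B^i$ after all.
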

	\begin{proof}
		Choose $a \le a' \le b' \le b$ such that $P [A^{i + 1}]$ is a path with endvertices $r^i$ and $l^{i + 1}$ for any $a' \le i < b'$, and, subject to this, $b' - a'$ is maximum. 
		
		Note that if $P [A^{i + 1}]$ is a path with endvertices $r^i, l^{i + 1}$, then $Q [A^{i + 1}]$ is a $P [A^{i + 1}]$-good even cactus of $A^{i + 1}$. Therefore, by Lemma~\ref{lem:II}, $Q [A^{i + 1}]$ does not span $A^{i + 1}$ and $A^{i + 1}$ is a bag of $Q$.
		
		By the maximality of $b' - a'$ and the fact that $P$ is a path, we have that $P$ is contained in $G^-[r^{a' - 1}, l^{b' + 1}]$. This and the minimality of $b - a$ imply that for any $i$ with $a < i < a'$ or $b' < i < b$, $Q [B^i]$ is an even cactus of $B^i$ containing $l^i$ and $r^i$. Moreover, $l^i$ and $r^i$ have block degree one in $Q [B^i]$ while all other vertices in $Q [B^i]$ have block degree at most two. By Lemma~\ref{lem:O}, we assure that $B^i$ is a bag of $Q$. Thus we have that $Q$ has at least $(b' - a') + (a' - a - 1) + (b - b' - 1) = b - a - 2$ bags.
		
		Suppose $Q$ has no bag and $b = a + 2$, we claim that $P$ is contained in $G^-[r^a, l^{a + 2}]$. Otherwise, by symmetry, we may assume that $P$ has one endvertex in $B^a - r^a$. Let $p$ be the other endvertex of $P$. If $p$ is in $G^-[l^{a + 1}, r^{a + 2}]$, then it follows from Lemma~\ref{lem:II} that $A^a$ is a bag of $Q$. If $p$ is not in $G^-[l^{a + 1}, r^{a + 2}]$, then, by Lemma~\ref{lem:O}, $B^{a + 1}$ is a bag of $Q$. In any case it contradicts the assumption that $Q$ has no bag. This thus justifies our claim.
	\end{proof}

	\begin{lemma} \label{lem:2good}
		If $Q$ is a $2$-good even cactus, then it has at least $b - a - 3$ bags.
	\end{lemma}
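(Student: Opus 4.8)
The plan is to mirror the proof of Lemma~\ref{lem:1good}, now applied to the two edge paths $P_1$ and $P_2$ simultaneously. For each $j \in \{1, 2\}$ I would choose $a \le a_j' \le b_j' \le b$ maximal such that $P_j[A^{i+1}]$ is a path with endvertices $r^i$ and $l^{i+1}$ for every $a_j' \le i < b_j'$. The first key point is that $P_1$ and $P_2$ can never fully traverse the same copy $A^{i+1}$ in this way: if they did, both would contain $r^i = u_1^{i+1}$ and $l^{i+1} = u_3^{i+1}$, contradicting that $P_1$ and $P_2$ share at most one vertex. Hence the index sets $\{i : a_1' \le i < b_1'\}$ and $\{i : a_2' \le i < b_2'\}$ are disjoint, and the fully traversed copies they name form disjoint families of sizes $n_1 := b_1' - a_1'$ and $n_2 := b_2' - a_2'$.

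For the bags coming from the $A$-copies I would argue as in Lemma~\ref{lem:1good}: whenever $P_j[A^{i+1}]$ is an $r^i$--$l^{i+1}$ path and no vertex of $A^{i+1}$ lies on the other path, $Q[A^{i+1}]$ is a $P_j[A^{i+1}]$-good even cactus, so Lemma~\ref{lem:II} shows that $A^{i+1}$ is a bag, yielding $n_1 + n_2$ bags in total. For the bags coming from the $B$-copies, the maximality of $b_j' - a_j'$ together with $P_j$ being a path gives $P_j \subseteq G^-[r^{a_j'-1}, l^{b_j'+1}]$, so $P_j$ meets only the copies $B^i$ with $a_j' \le i \le b_j'$. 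Consequently, for every $i$ with $a < i < b$ whose index avoids both intervals $[a_1', b_1']$ and $[a_2', b_2']$, the subgraph $Q[B^i]$ is a good even cactus in which, by the minimality of $b - a$, both $l^i$ and $r^i$ have block degree one; Lemma~\ref{lem:O} then certifies that $B^i$ is a bag.

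The count then follows from elementary arithmetic. There are $b - a - 1$ copies $B^i$ with $a < i < b$, and at most $(n_1 + 1) + (n_2 + 1)$ of their indices lie in $[a_1', b_1'] \cup [a_2', b_2']$, so at least $(b - a - 1) - (n_1 + 1) - (n_2 + 1)$ of them are bags; adding the $n_1 + n_2$ bags from the $A$-copies gives at least $(b - a - 1) - 2 = b - a - 3$ bags. The slack here is exactly two, one per path, which matches the pattern $b-a-1$, $b-a-2$, $b-a-3$ of the previous three lemmas; note also that any overlap of $[a_1', b_1']$ and $[a_2', b_2']$ only improves the bound, so I need not analyse it.

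The step I expect to be the main obstacle is justifying that each fully traversed copy $A^{i+1}$ really is a bag, since in principle the other path could intrude into a copy that $P_j$ fully traverses, creating a vertex of block degree three off $P_j[A^{i+1}]$ (or the unique block-degree-four vertex), so that $Q[A^{i+1}]$ fails to be $P_j[A^{i+1}]$-good and Lemma~\ref{lem:II} does not apply verbatim. The disjointness of the full-traversal index sets confines such interference to the single interface copy where the two ranges abut, and the hard part will be to rule it out there: I would use the hypothesis that $P_1$ and $P_2$ share at most one vertex together with the cyclic structure of $B$, noting that two paths which both cross a cycle block of $B$ from $l^i$ to $r^i$ are forced to share the two endvertices $l^i$ and $r^i$. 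A careful treatment of this interface, in the spirit of the final paragraph of the proof of Lemma~\ref{lem:1good}, would complete the argument.
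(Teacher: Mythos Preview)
Your outline parallels the paper's proof, but with one crucial difference: you handle $P_1$ and $P_2$ separately, choosing an interval $[a_j',b_j')$ for each, whereas the paper works with $P_1\cup P_2$ as a single object and selects two intervals $[a',b')$, $[a'',b'')$ on which $(P_1\cup P_2)[A^{i+1}]$ itself is an $r^i$--$l^{i+1}$ path. That seemingly cosmetic change is exactly what dissolves the obstacle you flag in your last paragraph.

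The gap you identify is genuine, and your proposed patch does not close it. Interference cannot be ``ruled out''. Take the boundary case $b_1'=a_2'$: it is perfectly consistent for $P_1$ to end somewhere inside $B^{b_1'}$ (so $P_1$ does \emph{not} cross $B^{b_1'}$ from $l^{b_1'}$ to $r^{b_1'}$) while $P_2$ does cross it and then continues leftwards into $A^{b_1'}$, terminating at some interior vertex $v$; the unique shared vertex is $l^{b_1'}$. Your $B$-crossing argument yields no contradiction here, yet the internal vertices of $P_2[A^{b_1'}]$ lie off $P_1[A^{b_1'}]$ and may have block degree three in $Q$, so $Q[A^{b_1'}]$ need not be $P_1[A^{b_1'}]$-good, and $(P_1\cup P_2)[A^{b_1'}]$ is a path from $r^{b_1'-1}$ to $v\neq l^{b_1'}$, so Lemma~\ref{lem:II} does not apply either. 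You lose that $A$-bag, and your tally drops to $b-a-4$. Your remark that overlap of $[a_1',b_1']$ and $[a_2',b_2']$ ``only improves the bound, so I need not analyse it'' is exactly backwards: it is precisely this overlap (one shared index in the union) that would recover the missing bag, so you \emph{must} analyse it.

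The paper's approach sidesteps all of this. Requiring $(P_1\cup P_2)[A^{i+1}]$ to be an $r^i$--$l^{i+1}$ path guarantees automatically that every vertex of $A^{i+1}$ off this path lies on neither $P_1$ nor $P_2$ and hence has block degree at most two, so Lemma~\ref{lem:II} applies without further case analysis; and the fact that $P_1\cup P_2$ is either two disjoint paths or a tree with a single branch vertex is what forces it into two windows $G^-[r^{a'-1},l^{b'+1}]\cup G^-[r^{a''-1},l^{b''+1}]$, from which the count $b-a-3$ drops out directly.
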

	\begin{proof}
		The proof is similar to what we have done for the previous lemmas. Let $P_1$ and $P_2$ be two edge path in $Q$ having at most one common vertex such that $Q$ is $\{P_1, P_2\}$-good. We may choose $a \le a' \le b' \le a'' \le b'' \le b$ such that $(P_1 \cup P_2) [A^{i + 1}]$ is a path with endvertices $r^i$ and $l^{i + 1}$ for any $i$ with $a' \le i < b'$ or $a'' \le i < b''$, and, subject to this, $b'' - a'' + b' - a'$ is maximum. Now, if $(P_1 \cup P_2) [A^{i + 1}]$ is a path with endvertices $r^i, l^{i + 1}$, then $Q [A^{i + 1}]$ is an even cactus of $A^{i + 1}$ satisfying the block degree condition required by Lemma~\ref{lem:II}, from which it follows that $Q [A^{i + 1}]$ does not span $A^{i + 1}$ and $A^{i + 1}$ is a bag of $Q$.
		
		As $a', b', a'', b''$ are chosen with $b'' - a'' + b' - a'$ maximized and $P_1 \cup P_2$ is either a vertex-disjoint union of two paths or a tree that has at most one vertex of degree larger than two, we have that $P_1 \cup P_2$ is contained in $G^-[r^{a' - 1}, l^{b' + 1}] \cup G^-[r^{a'' - 1}, l^{b'' + 1}]$. For any $i$ with $a < i < a'$ or $b' < i < a''$ or $b'' < i < b$, $Q [B^i]$ is a good even cactus of $B^i$ containing vertices $l^i$ and $r^i$ of block degree one. Applying Lemma~\ref{lem:O}, we have that $B^i$ is a bag of $Q$, and hence $Q$ has at least $(b' - a') + (b'' - a'') + (a' - a - 1) + (a'' - b' - 1) + (b - b'' - 1) = b - a - 3$ bags.
	\end{proof}

\subsection{Proof of Theorem~\ref{thm:atmost3}} \label{sec:atmost3}
	
	In this section we shall show that $G(C_n)$ has a spanning good even cactus, but it does not contain any spanning good even cactus with maximum degree three. Note that $B$ will represent the fragment $C_n$ throughout this section.
	
	A spanning good even cactus $K_A$ of $A$ is depicted in Figure~\ref{fig:A}. We denote by $K_A^i$ the corresponding copy of $K_A$ in $A^i$. It is straightforward to verify that \begin{align*}
	\left( \bigcup_{i = 1}^8 K_A^i \right) &\cup \left( \bigcup_{i = 1, 3} (B^i - l^i w_1^i) \right) \cup \left( \bigcup_{i = 5, 7} (B^i - w_{n + 1}^i r^i) \right) \\
	&\hspace{-3em} \cup \left( \bigcup_{i = 2, 4, 6} (B^i - l^i v_1^i - w_{n + 1}^i r^i) \right)
	\cup \{s l^1, s w_1^1, t l^3, s w_1^3, t w_{n + 1}^5, s r^5, t w_{n + 1}^7, t r^5\}
	\end{align*} is a spanning good even cactus of $G$.
	
	Thus it is left to show that $G$ does not have any spanning good even cactus with maximum degree at most three. Suppose that $G$ has a spanning good even cactus $K$ with maximum degree at most three (reductio ad absurdum). Let $Q_1, \dots, Q_k$ be the components of $K - s - t$ that contain some vertex from $U_2 := \{u_2^1, \dots, u_2^8\}$.
	For every $Q_j$ ($j \in \{1, \dots, k\}$), we choose $0 \le a(j) \le b(j) \le 8$ such that $Q_j$ is contained in $G^-[l^{a(j)}, r^{b(j)}]$ and, subject to this, $b(j) - a(j)$ is minimum. Since the union of $Q_1, \dots, Q_k$ contains all vertices in $U_2$, we have that $$\sum_{j = 1}^{k} (b(j) - a(j)) \ge |U_2| = 8.$$ We denote by $q_1$ and $q_2$ the numbers of 1-good and 2-good components among $Q_1, \dots, Q_k$.
	
	Let $c_j$ be the number of bags of $Q_j$. Note that every bag $H$ of $Q_j$ does contain some component of $K - s - t$ that does not contain the endvertices of $H$. We have that $c_j$ bags of $Q_j$ contain (at least) $c_j$ distinct components of $K - s - t$. Moreover, it is not hard to see that no distinct components from $Q_1, \dots, Q_k$ can have any bag in common. Therefore $K - s - t$ has at least $\sum_{j = 1}^k (1 + c_j)$ components. 
	
	We consider the following two cases according to Lemma~\ref{lem:3or4max3}.
	
	\noindent\textbf{Case I.} If $K - s - t$ has at most four components such that at most two of them are 1-good even cacti and the rest are good even cacti, then, by Lemmas~\ref{lem:good} and~\ref{lem:1good}, $K - s - t$ has at least $\sum_{j = 1}^{k} (1 + c_j) \ge \sum_{j = 1}^{k} (b(j) - a(j)) - q_1 \ge 8 - 2 = 6$ components, which contradicts that $K - s - t$ has at most four components.
	
	\noindent\textbf{Case II.} If $K - s - t$ has at most three components such that one of them is a 2-good even cactus and the rest are good even cacti, then, by Lemmas~\ref{lem:good} and~\ref{lem:2good}, $K - s - t$ has at least $\sum_{j = 1}^{k} (1 + c_j) \ge \sum_{j = 1}^{k} (b(j) - a(j)) - 2 q_2 \ge 8 - 2 = 6$ components, which contradicts that $K - s - t$ has at most three components.
	
	Therefore we may conclude that $G$ has no spanning good even cactus with maximum degree at most three, and this completes the proof of Theorem~\ref{thm:atmost3}.

\subsection{Proof of Theorem~\ref{thm:no}} \label{sec:no}

	In this section we shall show that there is a Hamilton cycle in the prism over $G(D_n)$, but there is no spanning subgraph of $G(D_n)$ that is a good even cactus. Note that $B$ will be the fragment $D_n$ throughout this section.
	
	We first prove that $G(D_n) \square K_2$ is hamiltonian. Here we assume $n$ is odd, the case $n$ is even can be dealt with analogously. 
	
	We need the following result due to Ellingham, Salehi Nowbandegani and Shan~\cite{Ellingham2020}. 
	
	\begin{proposition}[{\cite[Theorem~2.3]{Ellingham2020}}] \label{pro:HinP}
		Let $Q$ be a good even cactus. The prism over $Q$ has a Hamilton cycle that contains the edge at $(v, \alpha) (v, \beta)$ for any $v \in V(Q)$ with $b_Q(v) = 1$.
	\end{proposition}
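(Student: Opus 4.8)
The plan is to prove a slightly sharper, \emph{simultaneous} version of the statement by induction on the number of blocks of $Q$: there is a single Hamilton cycle of $Q \square K_2$ that contains the vertical edge $(v,\alpha)(v,\beta)$ for \emph{every} vertex $v$ with $b_Q(v) = 1$ at the same time. This is the natural reading of the quantifier, and it clearly implies the per-vertex form; more importantly, carrying all such edges at once is exactly what makes the induction self-supporting, since the vertical edges at cut vertices are what get consumed when blocks are glued.

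For the base case I would treat the two possible single blocks. If $Q = K_2$ with vertices $u,v$, then $Q \square K_2$ is the $4$-cycle $(u,\alpha)(u,\beta)(v,\beta)(v,\alpha)$, which uses both vertical edges. If $Q$ is an even cycle $v_1 v_2 \cdots v_{2k}$, I would exhibit the explicit zigzag Hamilton cycle that, on each consecutive pair of columns, descends by a vertical edge and re-ascends by another, joining the pairs along the top copy, namely
\[ (v_1,\alpha)(v_1,\beta)(v_2,\beta)(v_2,\alpha)(v_3,\alpha)(v_3,\beta)(v_4,\beta)(v_4,\alpha)\cdots(v_{2k},\alpha)(v_1,\alpha). \]
Pairing the columns as $(1,2),(3,4),\dots$ closes up precisely because the cycle length is even. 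A direct check shows that every vertex is incident with exactly one vertical and one horizontal edge of this cycle, so it uses all $2k$ vertical edges; since every vertex of an even cycle has block degree one, this is exactly what is required.

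For the inductive step, suppose $Q$ has at least two blocks and fix a cut vertex $w$. Because $Q$ is a good cactus, $b_Q(w) = 2$, so $w$ lies in exactly two blocks $B_1, B_2$, and I would split $Q$ along $w$ into sub-cacti $Q_1 \ni B_1$ and $Q_2 \ni B_2$ with $V(Q_1) \cap V(Q_2) = \{w\}$ and $Q_1 \cup Q_2 = Q$; each is a good even cactus with strictly fewer blocks, and $w$ now has block degree one in both. Applying the induction hypothesis to $Q_1$ and $Q_2$ yields Hamilton cycles $H_1, H_2$ of $Q_1 \square K_2$ and $Q_2 \square K_2$ using all vertical edges over their block-degree-one vertices, in particular the vertical edge at $w$. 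Deleting that edge from each gives Hamilton paths $P_1, P_2$, both joining $(w,\alpha)$ to $(w,\beta)$. Since $Q_1 \square K_2$ and $Q_2 \square K_2$ meet only in $\{(w,\alpha),(w,\beta)\}$ together with the now-deleted vertical edge, $P_1 \cup P_2$ is a connected $2$-regular spanning subgraph, hence a single Hamilton cycle, and it keeps every vertical edge of $H_1$ or $H_2$ except the one at $w$. As the block-degree-one vertices of $Q$ are precisely those of $Q_1$ and $Q_2$ other than $w$, this cycle contains all required vertical edges, closing the induction.

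The main obstacle I anticipate is the even-cycle base case: one must produce a Hamilton cycle of the prism that uses \emph{every} vertical edge simultaneously and verify both that it is a single spanning cycle and that the parity of the cycle length is exactly what lets the zigzag pattern close up. The inductive merging is then mostly bookkeeping, with the two delicate points being that splitting at a block-degree-two cut vertex genuinely yields two good even cacti with fewer blocks (so the induction is well-founded) and that $P_1$ and $P_2$ overlap in exactly the two vertices $(w,\alpha),(w,\beta)$, so their union is a cycle rather than two disjoint cycles or a theta-graph.
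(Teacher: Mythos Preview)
Your proof is correct. The simultaneous strengthening (one Hamilton cycle using all vertical edges over block-degree-one vertices at once) is exactly what the induction needs, and both the even-cycle base case and the merging step at a cut vertex are verified cleanly; the key observations---that a cut vertex in a good cactus has block degree exactly two, that the split yields two good even cacti with strictly fewer blocks, and that the two Hamilton paths overlap only at $(w,\alpha)$ and $(w,\beta)$---are all sound.

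As for comparison: the paper does not give its own proof of this proposition. It is quoted directly as \cite[Theorem~2.3]{Ellingham2020} and used as a black box in the construction of the Hamilton cycle of $G(D_n)\square K_2$. So there is nothing to compare against here; your argument simply supplies a self-contained proof where the paper relies on the literature. The inductive approach you take (splitting at a cut vertex and gluing Hamilton paths along the vertical edge) is in fact the standard one and is essentially how the result is proved in the cited source as well.
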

	
	As shown in Figure~\ref{fig:A}, $A$ has a spanning good even cactus in which $u_1$ and $u_3$ have block degree one. Hence it follows from Proposition~\ref{pro:HinP} that there exists a Hamilton cycle $H_A$ in $A \square K_2$ containing the edges $(u_1, \alpha) (u_1, \beta)$ and $(u_3, \alpha) (u_3, \beta)$. In the prism over $B$, we define $L_B$, $S_B$ and $\tilde{S}_B$ to be the graphs depicted in Figures~\ref{fig:LS}(a),~(b) and~(c), respectively; and $R_B$ and $\tilde{R}_B$ be the reflections of $S_B$ and $\tilde{S}_B$, respectively. Each of the graphs $L_B$, $S_B$, $\tilde{S}_B$, $R_B$ and $\tilde{R}_B$ is a union of vertex-disjoint paths that spans $B \square K_2$.
	One can readily verify that \begin{align*}
	(H_A^1 - (u_3^1, \alpha) (u_3^1, \beta) ) \cup \left( \bigcup_{i = 2}^7 (H_A^i - (u_1^i, \alpha) (u_1^i, \beta) - (u_3^i, \alpha) (u_3^i, \beta) ) \right) \cup (H_A^8 - (u_1^8, \alpha) (u_1^8, \beta) ) \\
	\cup \left( \bigcup_{i = 2, 4, 6} L_B^i \right)
	\cup S_B^1 \cup \{ (s, \beta)(w_n^1, \beta), (t, \alpha)(x_{2n + 1}^1, \alpha) \} \cup \tilde{S}_B^3 \cup \{(s, \beta)(w_n^3, \beta), (t, \beta)(x_{2n}^3, \beta)\} \\
	\cup R_B^5 \cup \{ (s, \alpha)(w_n^5, \alpha), (t, \beta)(x_{2n + 1}^5, \beta) \} \cup \tilde{R}_B^7 \cup \{(s, \alpha)(w_n^7, \alpha), (t, \alpha)(x_{2n}^7, \alpha)\} \\
	\end{align*} is a Hamilton cycle of $G \square K_2$.
	
	\begin{figure} [h!t]
		\centering
		\subfigure[]{\includegraphics[scale = 0.8]{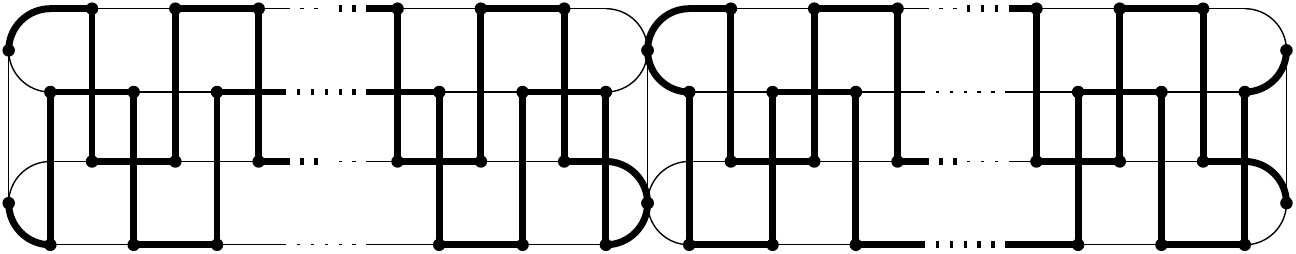}} \label{subfig:L}\\
		\subfigure[]{\includegraphics[scale = 0.8]{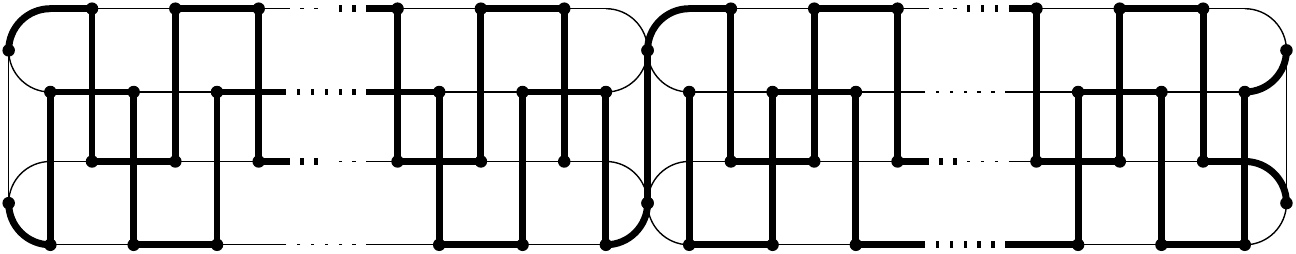}} \label{subfig:S}\\
		\subfigure[]{\includegraphics[scale = 0.8]{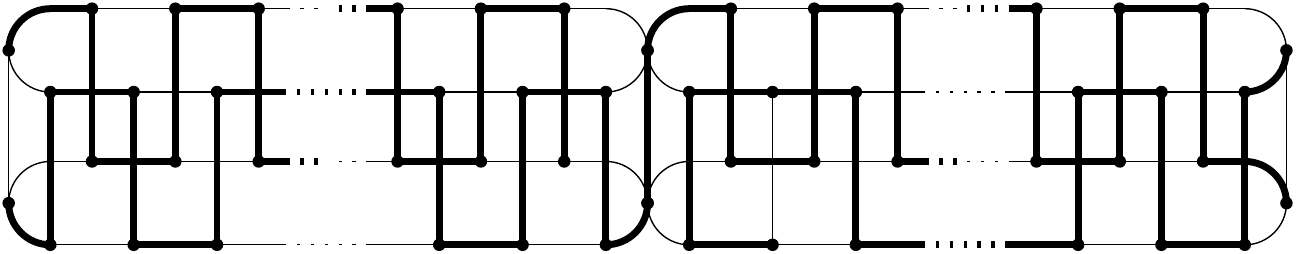}} \label{subfig:St}
		\caption{In each subfigure the thin and thick edges together represent the prism over $D_n$ such that the vertices of $V(D_n) \times \{\alpha\}$ are placed above that of $V(D_n) \times \{\beta\}$; the leftmost and rightmost straight edges denote $(l, \alpha) (l, \beta)$ and $(r, \alpha) (r, \beta)$, respectively; and each copy of $D_n$ is embedded in the same way as depicted in Figure~\ref{fig:CD}(b). (a) The graph $L_{D_n}$ (thick edges) consists of one path with endvertices $(l, \alpha)$ and $(l, \beta)$ and one with endvertices $(r, \alpha)$ and $(r, \beta)$. (b) The graph $S_{D_n}$ (thick edges) consists of one path with endvertices $(l, \alpha)$ and $(w_n, \beta)$, one with endvertices $(l, \beta)$ and $(r, \beta)$ and one with endvertices $(r, \alpha)$ and $(x_{2n + 1}, \alpha)$. (c) The graph $S_{D_n}$ (thick edges) consists of one path with endvertices $(l, \alpha)$ and $(w_n, \beta)$, one with endvertices $(l, \beta)$ and $(r, \beta)$ and one with endvertices $(r, \alpha)$ and $(x_{2n}, \beta)$.
			} 
		\label{fig:LS}
	\end{figure}
	
	It is now left to show that $G$ does not have any spanning good even cactus. Suppose that $G$ has a spanning good even cactus $K$ (reductio ad absurdum). As in the proof of Theorem~\ref{thm:atmost3}, we consider  the components $Q_1, \dots, Q_k$ of $K - s - t$ that contain some vertex from $U_2 := \{u_2^1, \dots, u_2^8\}$.
	For any component $Q_j$ of $K - s - t$ ($j \in \{1, \dots, k\}$), we choose $0 \le a(j) \le b(j) \le 8$ such that $Q_j$ is contained in $G^-[l^{a(j)}, r^{b(j)}]$ and, subject to this, $b(j) - a(j)$ is minimum. Again, the inequality $\sum_{j = 1}^{k} (b(j) - a(j)) \ge 8$ holds. Let $q_1$ and $q_2$ be the numbers of 1-good and 2-good cacti among $Q_1, \dots, Q_k$, respectively.
	
	As we have discussed in the previous section, $K - s - t$ has at least $\sum_{j = 1}^k (1 + c_j)$ components, where $c_j$ is the number of bags of $Q_j$. By Lemma~\ref{lem:3or4}, we have the following two cases.
	
	\noindent\textbf{Case I.} If $K - s - t$ consists of at most four even cacti which are good or 1-good, then, by Lemmas~\ref{lem:good} and~\ref{lem:1good}, $K - s - t$ has at least $\sum_{j = 1}^{k} (1 + c_j) \ge \sum_{j = 1}^{k} (b(j) - a(j)) - q_1 \ge 8 - 4 = 4$ components. Since there are at most four components in $K - s - t$, the equality must hold. In this case we must have that $q_1 = 4$. This implies that $k = 4$ and every component $Q_j$ is 1-good. Then we have $c_j = 0$ for every $j$. In other words, no $Q_j$ can have any bag. As $0 = c_j \ge b(j) - a(j) - 2$ holds for every $j$ and $\sum_{j = 1}^{4} (b(j) - a(j)) \ge 8$, we have that $b(j) = a(j) + 2$ for every $j \in \{1, \dots, 4\}$. We may assume that $a(j) = 2(j - 1)$ for any $j$, and that $Q_1$ and $Q_2$ are $P_1$-good and $P_2$-good, respectively. By Lemma~\ref{lem:1good}, $P_1$ and $P_2$ are contained in $G^-[r^0, l^2]$ and $G^-[r^2, l^4]$, respectively. If either $Q_1$ or $Q_2$ does not intersect $B^2$, then $(Q_1 \cup Q_2) [B^2]$ is a spanning good even cactus of $B^2$, indeed, an edge path with $l^2$ or $r^2$ has block degree one, which is clearly impossible. If both $Q_1$ and $Q_2$ intersect $B^2$, then $(Q_1 \cup Q_2) [B^2]$ is a union of two edge paths, one of which has $l^2$ as endvertex and the other has $r^2$ as endvertex, spanning the graph $B^2$, which is, again, impossible.
	
	\noindent\textbf{Case II.} If $K - s - t$ has at most three components such that one of them is a 2-good even cactus and the rest are good or 1-good even cacti, then, by Lemmas~\ref{lem:good},~\ref{lem:1good} and~\ref{lem:2good}, $K - s - t$ has at least $\sum_{j = 1}^{k} (1 + c_j) \ge \sum_{j = 1}^{k} (b(j) - a(j)) - q_1 - 2 q_2 \ge 4$ components, which contradicts that $K - s - t$ has at most three components.
	
	Hence we conclude that no spanning subgraph of $G$ can be a good even cactus, and this completes the proof of Theorem~\ref{thm:no}.
	
\section*{Acknowledgements}

The author wishes to thank Carol T.\ Zamfirescu for helpful discussion.

\bibliographystyle{abbrv}
\bibliography{paper}
\end{document}